\renewcommand\eqref[1]{(\ref{#1})} 
\renewcommand\eqref[1]{(\ref{#1})}
\newcommand*{\mint}[1]{%
  \mint@l{#1}{}%
}
\newcommand*{\mint@l}[2]{%
  \@ifnextchar\limits{%
    \mint@l{#1}%
  }{%
    \@ifnextchar\nolimits{%
      \mint@l{#1}%
    }{%
      \@ifnextchar\displaylimits{%
        \mint@l{#1}%
      }{%
        \mint@s{#2}{#1}%
      }%
    }%
  }%
}
\newcommand*{\mint@s}[2]{%
  \@ifnextchar_{%
    \mint@sub{#1}{#2}%
  }{%
    \@ifnextchar^{%
      \mint@sup{#1}{#2}%
    }{%
      \mint@{#1}{#2}{}{}%
    }%
  }%
}
\def\mint@sub#1#2_#3{%
  \@ifnextchar^{%
    \mint@sub@sup{#1}{#2}{#3}%
  }{%
    \mint@{#1}{#2}{#3}{}%
  }%
}
\def\mint@sup#1#2^#3{%
  \@ifnextchar_{%
    \mint@sup@sub{#1}{#2}{#3}%
  }{%
    \mint@{#1}{#2}{}{#3}%
  }%
}
\def\mint@sub@sup#1#2#3^#4{%
  \mint@{#1}{#2}{#3}{#4}%
}
\def\mint@sup@sub#1#2#3_#4{%
  \mint@{#1}{#2}{#4}{#3}%
}
\newcommand*{\mint@}[4]{%
  \mathop{}%
  \mkern-\thinmuskip
  \mathchoice{%
    \mint@@{#1}{#2}{#3}{#4}%
        \displaystyle\textstyle\scriptstyle
  }{%
    \mint@@{#1}{#2}{#3}{#4}%
        \textstyle\scriptstyle\scriptstyle
  }{%
    \mint@@{#1}{#2}{#3}{#4}%
        \scriptstyle\scriptscriptstyle\scriptscriptstyle
  }{%
    \mint@@{#1}{#2}{#3}{#4}%
        \scriptscriptstyle\scriptscriptstyle\scriptscriptstyle
  }%
  \mkern-\thinmuskip
  \int#1%
  \ifx\\#3\\\else_{#3}\fi
  \ifx\\#4\\\else^{#4}\fi
}
\newcommand*{\mint@@}[7]{%
  \begingroup
    \sbox0{$#5\int\m@th$}%
    \sbox2{$#5\int_{}\m@th$}%
    \dimen2=\wd0 %
    \let\mint@limits=#1\relax
    \ifx\mint@limits\relax
      \sbox4{$#5\int_{\kern1sp}^{\kern1sp}\m@th$}%
      \ifdim\wd4>\wd2 %
        \let\mint@limits=\nolimits
      \else
        \let\mint@limits=\limits
      \fi
    \fi
    \ifx\mint@limits\displaylimits
      \ifx#5\displaystyle
        \let\mint@limits=\limits
      \fi
    \fi
    \ifx\mint@limits\limits
      \sbox0{$#7#3\m@th$}%
      \sbox2{$#7#4\m@th$}%
      \ifdim\wd0>\dimen2 %
        \dimen2=\wd0 %
      \fi
      \ifdim\wd2>\dimen2 %
        \dimen2=\wd2 %
      \fi
    \fi
    \rlap{%
      $#5%
        \vcenter{%
          \hbox to\dimen2{%
            \hss
            $#6{#2}\m@th$%
            \hss
          }%
        }%
      $%
    }%
  \endgroup
}
\numberwithin{equation}{section}
\theoremstyle{plain}
 \newtheorem{thm}{Theorem}[section]
 \newtheorem{cor}[thm]{Corollary}
 \newtheorem{lem}[thm]{Lemma}
 \newtheorem{prop}[thm]{Proposition}
 \theoremstyle{definition}
 \newtheorem{defn}[thm]{Definition}
 \theoremstyle{remark}
 \newtheorem{rem}[thm]{Remark}
 \newtheorem{ex}{Example}
 \numberwithin{equation}{section}
\newcommand{\half}{\frac{1}{2}}
\newcommand{\qf}{\frac{q}{4}}
\newcommand{\qt}{\frac{q}{2}}
\newcommand{\ene}{\mathbb{N}}
\newcommand{\ar}{\mathbb{R}}
\newcommand{\arn}{{\mathbb{R}}^n}
\newcommand{\Rn}{{\mathbb{R}}^n}
\newcommand{\ardn}{{\mathbb{R}}^{n}\times{\mathbb{R}}^{n}}
\newcommand{\bi}{\begin{itemize}}
\newcommand{\ei}{\end{itemize}}
\newcommand{\be}{\begin{enumerate}}
\newcommand{\ee}{\end{enumerate}}
\newcommand{\beq}{\begin{equation}}
\newcommand{\eq}{\end{equation}}
\newcommand{\jp}{\langle\xi\rangle}
\newcommand{\jpx}{\langle x\rangle}
\DeclareMathOperator{\os}{o}
\title[ON A CLASS OF ANHARMONIC OSCILLATORS II. GENERAL CASE]{ON A CLASS OF ANHARMONIC OSCILLATORS II. GENERAL CASE}
\author[M. Chatzakou]{Marianna Chatzakou}
\address{
	Marianna Chatzakou:
	\endgraf
    Department of Mathematics: Analysis, Logic and Discrete Mathematics
    \endgraf
    Ghent University, Belgium
  	\endgraf
	{\it E-mail address} {\rm marianna.chatzakou@ugent.be}
		}
\author[J. Delgado]{Julio Delgado}
\address{
  Julio Delgado:
  \endgraf
  Departamento de Matem\'aticas
  \endgraf
  Universidad del Valle, Colombia
  \endgraf
	{\it E-mail address} {\rm delgado.julio@correounivalle.edu.co} }
\author[M. Ruzhansky]{Michael Ruzhansky}
\address{
  Michael Ruzhansky:
  \endgraf
  Department of Mathematics: Analysis, Logic and Discrete Mathematics
  \endgraf
  Ghent University, Belgium
  \endgraf
 and
  \endgraf
  School of Mathematical Sciences
  \endgraf
  Queen Mary University of London
  \endgraf
  United Kingdom
  \endgraf
  {\it E-mail address} {\rm michael.ruzhansky@ugent.be}
  }
\begin{document}

\thanks{The authors are supported by the FWO Odysseus 1 grant G.0H94.18N: Analysis and Partial Differential Equations and by the Methusalem programme of the Ghent University Special Research Fund (BOF) (Grant number 01M01021). Julio Delgado is also supported by Vic. Inv Universidad del Valle.  Grant No.  CI-71281.  Michael Ruzhansky is also supported by EPSRC grants EP/R003025/2 and EP/V005529. \\
\indent
{\it Keywords:} Fractional relativistic Schr\"odinger operators; anharmonic oscillators; energy levels; Weyl-H\"ormander calculus; microlocal analysis; growth of eigenvalues}

\begin{abstract} 
 In this work we study a class of anharmonic oscillators on $\arn$ corresponding to  Hamiltonians of the form $A(D)+V(x)$,  where $A(\xi)$   and $V(x)$ are   
 $C^{\infty}$ functions enjoying some regularity conditions. Our class includes fractional relativistic Schr\"odinger operators and anharmonic oscillators with fractional potentials. By associating  a H\"ormander metric we obtain spectral properties in terms of Schatten-von Neumann classes for their negative powers and derive from them estimates on the rate of growth for the eigenvalues of the operators $A(D)+V(x)$.  This extends the analysis in the first part \cite{CDR}, where the case of polynomial $A$ and $V$ has been analysed.
\end{abstract}

\maketitle

\tableofcontents

\section{Introduction}
In this manuscript we obtain  spectral properties and in particular estimates for the rate of growth of eigenvalues for a class of operators on $\Rn$ of the form  $A(D)+V(x)$, where $A(\xi)$ and $V(x)$ are appropriate smooth functions. Some important examples in this class  include fractional relativistic Schr\"odinger operators, the special cases of  relativistic Schr\"odinger operators $\sqrt{I-\Delta}+V(x)$, and anharmonic oscillators with fractional power potentials $(-\Delta)^{\ell}+\jpx^{2\kappa}$, where $\ell$ is a positive integer and  $\kappa>0$. Our class also allows lower order terms with respect to each symbol $A(\xi)$ and $V(x)$; notably real-valued potentials with a finite range of negative values. Hamiltonians in relativistic quantum mechanics, and quantum field theory are not, in general, partial
differential operators (as in nonrelativistic quantum mechanics), but pseudo-differential operators. Herein, we frame our class within the setting of the Weyl-H\"ormander calculus by introducing a suitable H\"ormander metric and obtaining a substantial extension of the class considered in \cite{CDR}.\\
 
The mother case of the Hamiltonian  $-\Delta+V(x)$ $(\ell=1)$, corresponding to the Schr\"odinger equation is one of the milestone  objects of study in mathematical physics, specially since the the study of energy levels for the Schr\"odinger equation is reduced to the eigenvalue problem associated the Hamiltonian. 
 One of the most basic examples of anharmonic oscillators of  polynomial  form $-\Delta+|x|^{2k}$, is the one dimensional quartic oscillator ($n=1, \, k=2$) which is an important model in quantum physics and has been intensively studied in the last 50 years. However, the exact solution for the eigenvalue problem of this model is unknown (cf. \cite{OU}, \cite{BB}). It is clear that the situation is even more subtle for non-polynomial Hamiltonians. This kind of fact is a further motivation for the investigation of different  approximative and qualitative properties for the operators $A(D)+V(x)$. \\

The case of polynomial Hamiltonians has been widely studied. A class  of anharmonic oscillators  arises in the form $$-\frac{d^{2\ell}}{dx^{2\ell}}+x^{2k}+p(x),$$ where $p(x)$ is a polynomial of order $2k-1$ on $\ar$ and with $k, \ell$ integers $\geq 1$. The spectral asymptotics of such operators  have been analysed by   B. Helffer and D. Robert \cite{HD81, HD82, H}. The authors have recently studied  anharmonic oscillators  on  $\arn$ in \cite{CDR}, where a prototype operator is of the form
 \beq (-\Delta)^{\ell}+|x|^{2k} \label{gahw1},\eq
where $k,\ell$ are integers $\geq 1$. By considering $A(\xi)=|\xi|^{2\ell}$ and $V(x)=|x|^{2k}$, these symbols will be absorbed by our new class considered in this paper.   Spectral properties for the  type \eqref{gahw1} in the case $k=\ell$ can be found in \cite{H} and  \cite{Rob}.  The case of a fractional Laplacian and a quartic potential has been considered by S. Durugo and  J. L\"orinczi in \cite{DL}. Bochner–Riesz means and spectral analysis for the one-dimensional anharmonic oscillator 
$-\frac{d^{2\ell}}{dx^{2\ell}}+|x|$ has been recently studied in \cite{CHS}. Physical models related to the operator $\sqrt{-\Delta+m^2} $ have been intensely studied in the last 30 years and there exists a huge literature on the spectral properties of relativistic Hamiltonians, most of it strongly influenced by the works of Lieb  on the stability of relativistic matter (cf. \cite{LY88}, \cite{LY87}, \cite{LL}). On the other hand, the study of general  fractional relativistic Schr\"odinger operators $(I-\Delta)^{\gamma}+V(x)$,  $\gamma >0,$ has also attracted  the interest in the last decades. The operators   $(I-\Delta)^{\gamma}$ are also 
related to the so-called {\it relativistic} $\gamma$-stable process. See for  instance  \cite{Amb}, \cite{FF} and the references therein for recent investigation on the fractional relativistic Schr\"odinger operators.

Herein we study the operator $A(D)+V(x)$ within the setting of H\"ormander's $S(m,g)$ classes by introducing a H\"ormander metric $g^{(A,V)}$ in Section \ref{SEC:anharmonic}. In Section \ref{SEC:anharmonic-notes} we give an intrinsic formulation of such classes:

\smallskip
{\em  For $m\in\ar$,  appropriate functions $A(\xi), V(x)$ on $\Rn$  and suitable values of $\gamma$ and $\kappa$, the class $\Sigma_{A,V}^m$
 consists of all $C^{\infty}(\ardn)$ functions enjoying the property  that for all multi-indexes $\alpha, \beta$ there exists a constant $C_{\alpha\beta}$ such that 
 \beq\label{sigmaclxx}|\partial_{x}^{\beta}\partial_{\xi}^{\alpha}a(x,\xi)|\leq C_{\alpha\beta}(q+V(x)+A(\xi))^{m-\frac{|\beta|}{2\kappa}-\frac{|\alpha|}{2\gamma}}\,,\eq
 holds all $x,\xi\in\mathbb R^n$.}
 
 \medskip
By looking at the negative powers of our operators $A(D)+V(x)$ and studying the corresponding  Schatten-von Neumann properties within the setting of H\"ormander $S(m,g)$ classes, we will deduce the rate of decay of the eigenvalues for those negative powers. Thus, from the inverses of these negative powers we obtain estimates for the rate of growth of eigenvalues of the operators $A(D)+V(x)$.  The investigation 
of such properties within these  classes started with H\"ormander \cite{Hor79}.  Other works on Schatten-von Neumann classes within the Weyl-H\"ormander calculus can be found in \cite{Tof06}, \cite{Tof08}. See also \cite{DM14}, \cite{DM14b}, \cite{DM17} for several symbolic and kernel criteria on different types of domains.  For the spectral theory of non-commutative versions of the harmonic oscillator we refer the reader to the  works of Parmeggiani et al \cite{P06}, \cite{P08}, \cite{P10}, \cite{P14}, \cite{PW01} and \cite{PW02}.\\

\medskip
The main results of this work give the order of the corresponding Schatten-von Neumann class for the negative powers of the operators $A(D)+V(x)$.  The special case of the trace class is also distinguished. That is the contents of Theorem \ref{sch1mf} and Corollary \ref{cor.b.1}. From those we derive estimates for the rate of growth of  our operators $A(D)+V(x)$ in \eqref{EQ:growthb}.  In Theorem  \ref{sch1mfff} we give a sharp condition to guarantee the membership of the negative powers of our main examples  to the  Schatten-von Neumann classes. At the end of the section we also provide examples for the special case of the fractional relativistic Schr\"odinger operators with fractional potentials.

\section{Weyl-H{\"o}rmander calculus and Schatten-von Neumann classes}

In this section we  briefly review some basic elements of the Weyl-H{\"o}rmander calculus and the Schatten-von Neumann classes.  For a comprehensive study on the Weyl-H{\"o}rmander calculus,  we refer the interested reader to  \cite{Hor85}, \cite{Ler}, \cite{BL}. \\
 
 Let us briefly recall the main concepts and ideas that will be involved in this work. Before doing so, let us note that, in the sequel, we shall use capital latin letters, e.g. $X,Y,T$ to denote elements in the phase space. These do, in turn, correspond to pairs of the form $(x,\xi),(y,\eta),(t,\tau)$, where Latin and Greek letters have been used for elements in  the configuration space $\mathbb{R}^n$, and its dual $(\mathbb{R}^{n})^{*}$, respectively. \\
 
\textit{ For $a=a(x,\xi) \in S'(\ardn)$ ($x\in \mathbb{R}^{n}$ and
$\xi\in \mathbb{R}^{n}$) and $t\in\ar$, we define the t-quantization of the symbol $a$ as being the operator $a_t(x,D):S(\mathbb{R}^{n})\rightarrow S(\mathbb{R}^{n})$ given by the formula \[a_t(x,D)u(x)=(2\pi)^{-n}\int_{\arn}\int_{\arn} e^{i(x-y)\xi}a(tx+(1-t)y,\xi)u(y)dyd\xi.\]
}\\

The operator $a_1(x,D)$, or for simplicity $a(x,D)$, is known as the {\em Kohn-Nirenberg  quantization}, while the operator $a_{\frac{1}{2}}(x,D)$, denoted also by $a^w(x,D)$, is called the {\em Weyl quantization}. In the particular cases of operators that we consider in this work, the corresponding symbols that appear stay invariant (up to lower under terms) under different quantizations.\\

To define Weyl-H\"ormander classes of symbols the following notions shall be recalled; if $g_X(\cdot)$ is a positive definite quadratic form on the phase space, then \textit{$g_\cdot(\cdot)$ is a H\"ormander's metric} if the following three conditions are satisfied:
\begin{enumerate}
\item {\bf Continuity or slowness}- There exist a constant $C>0$ such that 
\[g_{X}(X-Y)\leq C^{-1}\implies \left(\frac{g_X(T)}{g_Y(T)}\right)^{\pm 1}\leq 1\,,\quad \forall T\,.\]
\item {\bf Uncertainty principle}- We say that $g$ satisfies the {\em uncertainty principle }
if \[\lambda_{g}(X)=\inf_{T\neq 0}
\left(\frac{g_{X}^{\sigma}(T)}{g_{X}(T)}\right)^{1/2}\geq 1\,,\quad \forall X,T\,,\]
where $g_{X}^{\sigma}(T):=\sup_{W\neq 0}
\left\{\sigma(Y,T)^{2}/g_{X}(Y)\right\}$, and $\sigma(Y,T):=t \cdot \eta-y\cdot \tau$.
\item {\bf Temperateness}- We say  that $g$ is temperate if there exist $\overline{C}>0$ and $J\in \mathbb{N}$ such that
\begin{equation}\label{tempc1x}
  \left(\frac{g_{X}(T)}{g_{Y}(T)}\right)^{\pm1}\leq \overline{C}(1+g_{Y}^{\sigma}(X-Y))^{J}\,,\quad \forall X,Y,T\,.
\end{equation}
\end{enumerate}
For $g$ being a fixed H\"ormander metric, and $M$ being a positive function on the phase space we say that \textit{$M$ is a $g$-weight}, if the following are satisfied:
\begin{enumerate}
    \item {\bf $g$- continuous}, if there exists $\tilde{C}>0$
such that
\[g_{X}(X-Y)\leq \frac{1}{\tilde{C}}\implies\left( \frac{M(X)}{M(Y)}\right)^{\pm1}\leq \tilde{C}\,,\quad \forall X,Y\,.\]
\item {\bf $g$-temperate}, if there exist $\tilde{C}>0$ and $N\in \mathbb{N}$ such that
\[\left( \frac{M(X)}{M(Y)}\right)^{\pm1}\leq \tilde{C}(1+g_{Y}^{\sigma}(X-Y))^{N}\,,\quad \forall X,Y\,.\]

\end{enumerate}
For $M,g$ being as above, we define the \textit{set of symbols $S(M,g)$} as the set of smooth functions $a$ on the phase space such that for any integer $k$ there exists $C_{k}>0$, such that for all
$X,T_{1},...,T_{k}\in \ardn$ we have 
\beq |a^{(k)}(X;T_{1},...,T_{k})|\leq C_{k}M(X)\prod_{i=1}^{k} g_{X}^{1/2}(T_{i}) .\label{inwhk}\eq
The notation $a^{(k)}$ stands for the $k^{th}$ derivative of $a$ and  $a^{(k)}(X;T_{1},...,T_{k})$ denotes the $k^{th}$  derivative of $a$ at $X$ in the directions $T_{1},...,T_{k}$. 
For $a\in S(M,g)$ we define
\[
\|a\|_{k,S(M,g)}:=\inf \{C_k:C_k \quad \text{satisfies}\quad \eqref{inwhk} \}\,.
\]
The family of seminorms $\parallel \cdot\parallel_{k,S(M,g)}$ endows $S(M,g)$ with the topology of a Fr\'echet space.\\

The family of seminorms $\parallel \cdot\parallel_{k,S(M,g)}$ endows $S(M,g)$ with the topology of a Fr\'echet space.\\

Finally let us discuss some basic definitions of the \textit{Schatten-von Neumann classes of operators}; for a more detailed exposition of the theory we refer to \cite{GK}, \cite{RS}, \cite{Sim}, \cite{Sch}. \\

Let $H$ be a separable Hilbert space over $\mathbb{C}$ endowed with the inner product $(\cdot,\cdot)$, and let $T$ be some compact (linear) operator from $H$ to itself. We denote by $|T|:=(T^{*}T)^{1/2}$ the \textit{absolute value of $T$}, and by $s_n(T)$ the \textit{singular values of $T$}; that is the eigenvalues of $|T|$, that correspond to the eigenfunctions of the latter given by the spectral theorem. The operator $T$ belongs to the \textit{Schatten-von Neumann class of operators $S_{p}(H)$}, where $1\leq p< \infty$, if
\[
\|T\|_{S_p}:=\left( \sum_{k=1}^{\infty}(s_{k}(T))^{p} \right)^{\frac{1}{p}}<\infty\,.
\]
The space $S_{p}$ is a Banach space if endowed with the natural norm $\|\cdot\|_{S_p}$. In particular, the Banach space $S_{1}(H)$ is the space of \textit{trace-class operators}, while for $T\in S_1$ the quantity
\[
\textnormal{Tr}(T):=\sum_{n=1}^{\infty}(T\phi_n,\phi_n)\,,
\]
where $(\phi_n)$ is an orthonormal basis in $H$, is well-defined and shall be called the \textit{trace $\textnormal{Tr}(T)$ of $T$}. Moreover, the space $S_2(H)$ is identified with the space of \textit{Hilbert-Schmidt operators} on $H$.

\section{The class of operators and the metric}

\label{SEC:anharmonic}

In this section we start with the study of the specific class of operators that we consider here that are regarded to be of the form $A(D)+V(x)$. We first introduce a suitable class of functions for the terms  $A(\xi), V(x)$ of their symbols.

\begin{defn}\label{class1a}
Let $\Gamma:\Rn\rightarrow \ar $ be a continuous function. We say that $\Gamma$ is a \textit{$\tau$-function}, for some $\tau>0$, if the following conditions are satisfied:
\begin{enumerate}[label=(\alph*)]
\item \label{itm:gfun.a} there exists $q>0$ such that $\Gamma(x)+\qf>0$ for all $x\in\Rn$;
\item\label{itm:gfun.b} there exist $\tau>0$ such that 
\[C_1|(\qt+\Gamma(x))^{\frac{1}{2\tau}}-(\qt+\Gamma(y))^{\frac{1}{2\tau}}|\leq |x-y|, \, \mbox{ for all }|x|, |y|\geq R\,,\]
for some $C_1>0, R>0$.
\end{enumerate}
\end{defn}

\begin{ex}\label{exsy3} As some examples of functions satisfying Definition \ref{class1a} we can consider potentials of the form:
\begin{itemize}
\item[(i)]  $V(x)=\jpx^{2\kappa}+\, $ lower order, with $\kappa>0$ and choosing $\tau=\kappa$;
\item[(ii)] $V(x)=|x|^{2k}+\, $ lower order, where $k$ is an integer $\geq 1$, and choosing $\tau=k$.

Similarly for the symbol $A(\xi)$ of the operator $A(D)$ we can consider:
\item[(i)]  $A(\xi)=\jp^{2\gamma}+\, $ lower order,  with $\tau=\gamma>0$;
\item[(ii)] $A(\xi)=|\xi|^{2\ell}+\, $ lower order, where $\ell$ is an integer $\geq 1$ and $\tau=\ell$.

\end{itemize}
\end{ex}
More generally we define the following class of  operators we are going to consider. 

\begin{defn}{The $(\gamma, \kappa)$-class}\label{gtclass} Let $\gamma,\kappa>0$. We say that an operator of the form $A(D)+V(x)$ belongs to the  $(\gamma, \kappa)$-class, if $A(\xi)$ is a $\gamma$-function, and $V(x)$ is a $\kappa$-function.
\end{defn}

\begin{ex} Special examples of operators in the $(\gamma, \kappa)$-class. 

\begin{itemize}
\item[(i)]  \textit{Anharmonic oscillators} of the form
\[(-\Delta)^{\ell} + |x|^{2k},\]
where $k, \ell$ integers $\geq 1$, belong to the $(\ell,\kappa)$-class.\\

Spectral properties for this class of operators have been recently analysed by the authors in \cite{CDR} within the setting of Weyl-H\"ormander calculus. Another example of operators in the $(\ell,\kappa)$-class
are those expressed by the formula $-\frac{d^{2\ell}}{dx^{2\ell}}+x^{2k}+p_1(x)$, where $p_1$ is a suitable polynomial of order $2k-1$. Those have been studied by Helffer and Robert (cf. \cite{HD82}, \cite{HD82b}).
\item[(ii)] \textit{Relativistic Schr\"odinger operators} of the form
\[\sqrt{I-\Delta}+V(x),\]
where $V(x)$ satisfies the Definition \ref{class1a} for a suitable $\kappa>0$, belong the the $(1,\kappa)$-class.

\item[(iii)] \textit{Anharmonic oscillators with fractional potential}
\[(-\Delta)^{\ell} + \jpx^{2\kappa},\]
where $\ell$ is an integer $\geq 1$ and $\kappa>0$, are in the $(\ell,\kappa)$-class.
\end{itemize}
\end{ex}

We associate to an operator $A(D)+V(x)$ in the $(\gamma, \kappa)$-class, the following metric: 
\beq g^{(A,V)}=\frac{dx^2}{(q+V(x)+A(\xi))^{\frac{1}{\kappa}}}+\frac{d\xi ^2}{(q+V(x)+A(\xi))^{\frac{1}{\gamma}}}\label{anhmet012x}\eq
We choose a constant $q>0$ in \eqref{anhmet012x} to be  such that $q+V(x)+A(\xi)\geq 1$, for all $x,\xi \in \mathbb{R}^n$.

Our first aim is to prove that the metric introduced in \eqref{anhmet012x} is a H\"ormander metric. To this end, let us first recall the auxiliary lemma that will be used later in the proof of the continuity property. For the proof of it, the interested reader, can consult  \cite[Theorem 18.4.2]{Hor85}.
\begin{lem}\label{resl} Let $g$ be a Riemannian metric on the phase space. The following statements are equivalent:
\begin{enumerate}[label=(\roman*)]
    \item $g$ is continuous.
    \item \label{itm: cont.l.ii} There exists a constant $C\geq 1$ such that
\[g_X(X-Y)\leq C^{-1}\,\,\mbox{ implies }\,\, g_Y\leq Cg_X.\]
\item here exists a constant $C\geq 1$ such that
\[g_X(Y)\leq C^{-1}\,\,\mbox{ implies }\,\, g_{X+Y}\leq Cg_X.\]
\end{enumerate}
\end{lem}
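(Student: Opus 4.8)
This equivalence is classical — it is \cite[Theorem~18.4.2]{Hor85} — so the plan is simply to reprove it, which is pure bookkeeping with the quadratic forms $g_X$. The one structural fact I would use repeatedly is that each $g_X$ is an even function of its argument, so $g_X(X-Y)=g_X(Y-X)$, together with the dictionary that a bound $\big(g_X(T)/g_Y(T)\big)^{\pm1}\leq C$ for \emph{all} $T$ is the same as the two-sided comparison of forms $C^{-1}g_Y\leq g_X\leq Cg_Y$. Under this dictionary, the slowness property (i) of a H\"ormander metric says precisely: there is $C\geq 1$ with $g_X(X-Y)\leq C^{-1}\Rightarrow C^{-1}g_Y\leq g_X\leq Cg_Y$. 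Hence (i)$\Rightarrow$(ii) is immediate, being literally one half of this two-sided conclusion.

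The substantive implication is (ii)$\Rightarrow$(i), which I would obtain by applying (ii) twice after shrinking the threshold. Assume (ii) holds with constant $C$, and suppose $g_X(X-Y)\leq C^{-2}$. A first application of (ii) gives $g_Y\leq Cg_X$; evaluating this form inequality at $T=X-Y$ yields $g_Y(X-Y)\leq Cg_X(X-Y)\leq C^{-1}$. Since $g_Y(Y-X)=g_Y(X-Y)$, I can now apply (ii) with $X$ and $Y$ interchanged to get $g_X\leq Cg_Y$. Combining the two bounds, $C^{-1}g_Y\leq g_X\leq Cg_Y$ whenever $g_X(X-Y)\leq C^{-2}$, which is (i). The main — and essentially only — obstacle is this constant chase: one must start from $C^{-2}$ rather than $C^{-1}$ precisely so that the output $g_Y\leq Cg_X$ of the first step is strong enough to re-verify the hypothesis needed for the second step.

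Finally, (ii)$\Leftrightarrow$(iii) is a plain change of variables, again using that $g_X$ is even. Replacing the free point ``$Y$'' in (ii) by ``$X+Y$'', the hypothesis becomes $g_X\big(X-(X+Y)\big)=g_X(Y)\leq C^{-1}$ and the conclusion $g_{X+Y}\leq Cg_X$, which is exactly (iii). Conversely, given (iii) and points $X,Y$, set $Z:=Y-X$; then $g_X(X-Y)=g_X(Z)\leq C^{-1}$ forces $g_{X+Z}=g_Y\leq Cg_X$, which is (ii). Together with the previous paragraph this gives (i)$\Leftrightarrow$(ii)$\Leftrightarrow$(iii). No estimate in the argument is sensitive to the dimension or to the particular metric $g^{(A,V)}$; the only care required is the uniform tracking of the single constant $C$ through the two applications of (ii).
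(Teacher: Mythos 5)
Your proof is correct, but there is nothing in the paper to compare it against: the authors do not prove Lemma~\ref{resl} at all, they simply refer the reader to \cite[Theorem~18.4.2]{Hor85}. What you have supplied is a self-contained argument of exactly the kind Hörmander gives. All three moves check out: (i)$\Rightarrow$(ii) is indeed one half of the two-sided slowness conclusion (note the displayed slowness condition in the paper has $\leq 1$ on the right, which is clearly a misprint for $\leq C$ — the two-sided comparison $C^{-1}g_Y\leq g_X\leq Cg_Y$ is the intended meaning and is what you use); the bootstrap for (ii)$\Rightarrow$(i) works, since $g_X(X-Y)\leq C^{-2}$ with $C\geq 1$ lets you apply (ii) once, the intermediate estimate $g_Y(X-Y)\leq Cg_X(X-Y)\leq C^{-1}$ then re-enables (ii) with the roles of $X,Y$ swapped (evenness of the quadratic form being used to pass from $g_Y(X-Y)$ to $g_Y(Y-X)$), and the resulting two-sided bound gives slowness with constant $C^2$; and (ii)$\Leftrightarrow$(iii) is the substitution $Y\mapsto X+Y$, again via evenness. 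The only observation worth recording is that this argument shows a priori only a \emph{one-sided} inequality in (ii) and (iii) is needed — the symmetry $g_X\leq Cg_Y$ is recovered automatically — which is precisely why the equivalence is useful in the paper's proof of Theorem~\ref{contgk}, where only a one-sided estimate is verified directly.
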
 

We now consider the metric  $g^{(A,V)}$ in detail.
\begin{thm}\label{contgk} The metric $g=g^{(A,V)}$ defined by \eqref{anhmet012x} is a H\"ormander metric.
\end{thm}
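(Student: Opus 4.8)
The plan is to verify the three defining properties of a H\"ormander metric for $g=g^{(A,V)}$ in turn. Throughout write $F(X):=q+V(x)+A(\xi)$ for $X=(x,\xi)$, so that $g_X=F(X)^{-1/\kappa}\,dx^2+F(X)^{-1/\gamma}\,d\xi^2$ and, by the choice of $q$, $F(X)\ge 1$ everywhere. Since $A$ is a $\gamma$-function and $V$ a $\kappa$-function, Definition \ref{class1a}\ref{itm:gfun.a} (enlarging $q$ if needed) gives $\qt+V>\qf>0$ and $\qt+A>\qf>0$, so $\phi(x):=(\qt+V(x))^{1/(2\kappa)}$ and $\psi(\xi):=(\qt+A(\xi))^{1/(2\gamma)}$ are well defined and smooth, with $F(X)=\phi(x)^{2\kappa}+\psi(\xi)^{2\gamma}$, and in particular $\phi(x)^{2\kappa}\le F(X)$ and $\psi(\xi)^{2\kappa}\le F(X)$. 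A direct Cauchy--Schwarz computation for the diagonal form $g_X$ gives the dual metric $g_X^\sigma(T)=F(X)^{1/\gamma}|t|^2+F(X)^{1/\kappa}|\tau|^2$ for $T=(t,\tau)$; viewing $g_X^\sigma(T)/g_X(T)$ as a quotient of diagonal quadratic forms in $(|t|^2,|\tau|^2)$, its infimum over $T\ne 0$ equals the smaller of the two coefficient ratios, and both equal $F(X)^{1/\kappa+1/\gamma}$, so $\lambda_g(X)=F(X)^{\frac12(1/\kappa+1/\gamma)}\ge 1$ and the uncertainty principle holds. The same formula gives $g_Y^\sigma(X-Y)=F(Y)^{1/\gamma}|x-x'|^2+F(Y)^{1/\kappa}|\xi-\xi'|^2$ for $Y=(x',\xi')$, which, since $F(Y)\ge 1$, dominates both $|x-x'|^2$ and $|\xi-\xi'|^2$; this is the inequality I will use below.

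Both slowness and temperateness rest on the claim that \emph{$\phi$ and $\psi$ are globally Lipschitz on $\Rn$}. For $|x|,|x'|\ge R$ this is exactly condition \ref{itm:gfun.b}; on the compact ball $\{|x|\le R\}$ the function $\phi$ is Lipschitz because it is smooth there; and in the mixed case $|x|\le R<|x'|$ one inserts the point $z$ where the segment $[x,x']$ meets $\{|z|=R\}$ and adds the two resulting estimates. Write $L_V,L_A$ for the Lipschitz constants, and note the elementary inequality $(a+b)^m\le 2^{\max(m-1,0)}(a^m+b^m)$ for $m>0$ and $a,b\ge 0$, which I use repeatedly.

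For slowness I apply the criterion in Lemma \ref{resl}\ref{itm: cont.l.ii}. If $g_X(Y)\le C^{-1}$ then $|y|\le C^{-1/2}F(X)^{1/(2\kappa)}$ and $|\eta|\le C^{-1/2}F(X)^{1/(2\gamma)}$, so $\phi(x+y)\le\phi(x)+L_V|y|\le (1+L_VC^{-1/2})F(X)^{1/(2\kappa)}$ and similarly for $\psi$; raising to the powers $2\kappa$, $2\gamma$ and adding gives $F(X+Y)\le c_2 F(X)$ with $c_2$ independent of $X$. The reverse inequality $F(X)\le c_1^{-1}F(X+Y)$ follows the same way, after absorbing an error term $\lesssim C^{-\min(\kappa,\gamma)}F(X)$ into the left-hand side, which is legitimate once $C$ is large. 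From $c_1 F(X)\le F(X+Y)\le c_2 F(X)$ one gets $g_{X+Y}\le C' g_X$ with $C'$ depending only on $c_1,\kappa,\gamma$; fixing $C$ larger than $C'$ as well, Lemma \ref{resl}\ref{itm: cont.l.ii} yields slowness.

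Temperateness uses the same ingredients globally, with no smallness hypothesis. From $F(X)=\phi(x)^{2\kappa}+\psi(\xi)^{2\gamma}$, the Lipschitz bound, the power--mean inequality, and $|x-x'|^2\le g_Y^\sigma(X-Y)$,
\[
\phi(x)^{2\kappa}\le C_\kappa\bigl(\phi(x')^{2\kappa}+L_V^{2\kappa}|x-x'|^{2\kappa}\bigr)\le C_\kappa F(Y)+C_\kappa L_V^{2\kappa}\bigl(1+g_Y^\sigma(X-Y)\bigr)^{\lceil\kappa\rceil},
\]
and likewise for $\psi(\xi)^{2\gamma}$; adding and using $F(Y)\ge 1$ gives $F(X)\le\overline C\,F(Y)\,(1+g_Y^\sigma(X-Y))^{J_0}$ with $J_0:=\max(\lceil\kappa\rceil,\lceil\gamma\rceil)$, and the same estimate with $X,Y$ interchanged (now using $F(X)\ge 1$) gives the reverse, so $(F(X)/F(Y))^{\pm1}\le\overline C(1+g_Y^\sigma(X-Y))^{J_0}$. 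Since $(g_X(T)/g_Y(T))^{\pm1}\le\max\bigl(F(X)/F(Y),\,F(Y)/F(X)\bigr)^{\max(1/\kappa,1/\gamma)}$ for all $T$, raising to the power $\max(1/\kappa,1/\gamma)$ and rounding the exponent up yields \eqref{tempc1x} with $J=\lceil J_0\max(1/\kappa,1/\gamma)\rceil$. The main obstacle is the global Lipschitz claim for $\phi,\psi$ --- specifically, reconciling the region near the origin, where Definition \ref{class1a}\ref{itm:gfun.b} gives no information, with the behaviour for large $|x|$ --- together with the bookkeeping in the slowness step, where the admissible displacement $|y|$ is controlled only by $F(X)^{1/(2\kappa)}$, a quantity that also involves the possibly much larger term $A(\xi)$.
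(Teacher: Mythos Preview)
Your argument follows the paper's approach closely: compute $\lambda_g$ directly for the uncertainty principle, use the equivalence in Lemma \ref{resl} for slowness, and for both slowness and temperateness reduce matters to a two-sided comparability of $F(X)$ and $F(Y)$ via the Lipschitz-type control on $(\tfrac{q}{2}+V)^{1/(2\kappa)}$ and $(\tfrac{q}{2}+A)^{1/(2\gamma)}$ coming from Definition \ref{class1a}\ref{itm:gfun.b}. Packaging this control as a single global Lipschitz statement for $\phi,\psi$ is a clean organizational choice; the paper instead carries the inequality from condition \ref{itm:gfun.b} through the estimates directly, but the substance is the same.

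One point needs adjustment. Definition \ref{class1a} only assumes $V$ and $A$ are \emph{continuous}, so your claim that $\phi$ is ``smooth'' (hence Lipschitz) on the ball $\{|x|\le R\}$ is not justified by the hypotheses of the theorem, and is false in general: take e.g.\ $V(x)=|x|^{1/2}$ near $0$, extended to satisfy \ref{itm:gfun.b} for large $|x|$, so that $\phi'(x)$ blows up at the origin. What your proof actually uses, however, is only the one-sided bound $\phi(y)\le C\bigl(\phi(x)+|x-y|\bigr)$ for all $x,y$, and this \emph{does} follow from mere continuity: condition \ref{itm:gfun.b} gives it for $|x|,|y|\ge R$; on $\{|x|\le R\}$ the function $\phi$ is bounded above by compactness and bounded below by $(\tfrac{q}{4})^{1/(2\kappa)}>0$ from condition \ref{itm:gfun.a}, so the inequality is trivial there; and the mixed case follows by your segment-splitting trick, using only that $\phi$ is bounded on the sphere $\{|z|=R\}$ rather than Lipschitz up to it. With this weakening the rest of your argument goes through unchanged. (Two minor slips: you cite criterion \ref{itm: cont.l.ii} of Lemma \ref{resl} but actually verify the equivalent criterion (iii); and in the temperateness step, ``interchanging $X,Y$'' literally produces a bound involving $g_X^\sigma(X-Y)$ rather than $g_Y^\sigma(X-Y)$ --- but since your argument only used $|x-x'|^2\le g_Y^\sigma(X-Y)$, which comes from $F(Y)\ge 1$ and is symmetric in the roles of $X,Y$, the bound on $F(Y)/F(X)$ in terms of $g_Y^\sigma(X-Y)$ follows directly without any interchange.)
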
 
\begin{proof} Regarding the uncertainty parameter we have
	\[\lambda_{g}(X)=(q+V(x)+A(\xi))^{\frac{\kappa+\gamma}{2\kappa\gamma}}\, ,\]
 and it is clear that, due the choice of $q$ in \eqref{anhmet012x}, we have  $\lambda_{g}(X)\geq 1$ for all $X$.\\

For the proof of the continuity of the metric $g^{(A,V)}$, we will use the characterisation \ref{itm: cont.l.ii} in Lemma \ref{resl}; i.e., we aim to prove that 
\[\frac{|x-y|^2}{(q+V(x)+A(\xi))^{\frac{1}{\kappa}}}+\frac{|\xi-\eta|^2}{(q+V(x)+A(\xi))^{\frac{1}{\gamma}}}\leq C^{-1}\implies\]
\begin{multline*}
\frac{|t|^2}{(q+V(y)+A(\eta))^{\frac{1}{\kappa}}}+\frac{|\tau|^2}{(q+V(y)+A(\eta))^{\frac{1}{\gamma}}} \\
\leq C\left(\frac{|t|^2}{(q+V(x)+A(\xi))^{\frac{1}{\kappa}}}+\frac{|\tau|^2}{(q+V(x)+A(\xi))^{\frac{1}{\gamma}}}\right),
\end{multline*}
for all $X,Y,T \in\ardn$.\\

The proof can be reduced to proving that 
\[\frac{|x-y|^2}{(q+V(x)+A(\xi))^{\frac{1}{\kappa}}}\, ,\,\, \frac{|\xi-\eta|^2}{(q+V(x)+A(\xi))^{\frac{1}{\gamma}}}\leq C^{-1}\implies 
\frac{(q+V(x)+A(\xi))^{\frac{1}{\kappa}}}{(q+V(y)+A(\eta))^{\frac{1}{\kappa}}}\leq C .\]
Or, even simpler, to proving the following implication
\beq\frac{|x-y|}{(q+V(x)+A(\xi))^{\frac{1}{2\kappa}}}\, ,\,\, \frac{|\xi-\eta|^2}{(q+V(x)+A(\xi))^{\frac{1}{2\gamma}}}\leq C^{-1}\implies 
\frac{q+V(x)+A(\xi)}{q+V(y)+A(\eta)}\leq C .\label{lhd1t}\eq

We will assume that the LHS of \eqref{lhd1t} holds for a constant $C>0$ to be chosen later on. Since \[C_1|(\qt+V(x))^{\frac{1}{2\kappa}}-(\qt+V(y))^{\frac{1}{2\kappa}}|\leq |x-y|, \, \mbox{ for all }|x|, |y|\geq R;\]
and analogously for $A(\xi)$, we have  
\beq C_1(\qt+V(x))^{\frac{1}{2\kappa}}\leq  C^{-1}(q+V(x)+A(\xi))^{\frac{1}{2\kappa}}+C_1(\qt+V(y))^{\frac{1}{2\kappa}},\label{le23}
\eq
for  all $|x|, |y|\geq R$, and 
\beq C_1(\qt+A(\xi))^{\frac{1}{2\gamma}}\leq  C^{-1}(q+V(x)+A(\xi))^{\frac{1}{2\gamma}}+C_1(q+A(\eta))^{\frac{1}{2\gamma}}.\label{le24}
\eq
for all $|\xi|, |\eta|\geq R$. We note that due to the continuity of $A$ and $V$ we can also assume that the above inequalities hold for $|x|, |y|\leq 2R$ and $|\xi|, |\eta|\leq 2R$.\\

By taking powers $2\kappa$ and $2\gamma$ of \eqref{le23} and \eqref{le24} respectively, there exists a constant $C_2>1$  only dependent on $\kappa$ and $\gamma$ such that
\beq V(x)\leq  C^{-2\kappa}C_2(q+V(x)+A(\xi))+C_2V(y),\label{le231b}
\eq

\beq A(\xi)\leq   C^{-2\gamma}C_2(q+V(x)+A(\xi))+C_2A(\eta).\label{le241x}
\eq
By adding \eqref{le231b} and \eqref{le241x},  taking $\kappa_0=\min\{\kappa,\gamma\}$ and $C$ large enough, we obtain \[q+V(x)+A(\xi)\leq  \tilde{C}( q+V(y)+A(\eta))\,,\]
for some $\tilde{C}>0$. Hence, we obtain
\[\frac{q+V(x)+A(\xi)}{q+V(y)+A(\eta)}\leq \tilde{C}\,,\]
and the last inequality shows the continuity of the metric $g^{(A,V)}$.\\ 

We now prove the temperateness. According to \eqref{tempc1x}, we need to prove that there exist $C>0$ and $N\in\ene$ such that

\begin{multline*}
\frac{|t|^2}{(q+V(x)+A(\xi))^{\frac{1}{\kappa}}}+\frac{|\tau|^2}{(q+V(x)+A(\xi))^{\frac{1}{\gamma}}} \\
\leq C\left(\frac{|t|^2}{(q+V(y)+A(\eta))^{\frac{1}{\kappa}}}+\frac{|\tau|^2}{(q+V(y)+A(\eta))^{\frac{1}{\gamma}}}\right)\times \\
 \times\left(1+(q+V(x)+A(\xi))^{\frac{1}{\gamma}}|x-y|^2+(q+V(x)+A(\xi))^{\frac{1}{\kappa}}|\xi-\eta|^2\right)^{N},
\end{multline*}
and 
 \begin{multline*}
 \frac{|t|^2}{(q+V(y)+A(\eta))^{\frac{1}{\kappa}}}+\frac{|\tau|^2}{(q+V(y)+A(\eta))^{\frac{1}{\gamma}}} \\ \leq
 C\left(\frac{|t|^2}{(q+V(x)+A(\xi))^{\frac{1}{\kappa}}}+\frac{|\tau|^2}{(2+V(x)+A(\xi))^{\frac{1}{\gamma}}}\right)\times \\
\times\left(1+(q+V(x)+A(\xi))^{\frac{1}{\gamma}}|x-y|^2+(q+V(x)+A(\xi))^{\frac{1}{\kappa}}|\xi-\eta|^2\right)^{N},
  \end{multline*}
 for all $t, \tau\in\arn$.\\
 
We will only prove the first inequality since the second one can be proven in a similar way.\\  
 
We now observe that we can reduce the proof of the first inequality to the proof of the following two inequalities:
\begin{multline*}
\frac{(q+V(y)+A(\eta))^{\frac{1}{\kappa}}}{(q+V(x)+A(\xi))^{\frac{1}{\kappa}}}\leq \\ \leq
 C\left(1+(q+V(x)+A(\xi))^{\frac{1}{\gamma}}|x-y|^2+(q+V(x)+A(\xi))^{\frac{1}{\kappa}}|\xi-\eta|^2\right)^{N},
   \end{multline*}
and
 \begin{multline*}
 \frac{(q+V(y)+A(\eta))^{\frac{1}{\gamma}}}{(q+V(x)+A(\xi))^{\frac{1}{\gamma}}} \leq\\ \leq
 C\left(1+(q+V(x)+A(\xi))^{\frac{1}{\gamma}}|x-y|^2+(q+V(x)+A(\xi))^{\frac{1}{\kappa}}|\xi-\eta|^2\right)^{N}.
   \end{multline*}
We note that it is enough to prove the first inequality, which, in turn, can be reduced to the following one 
\[\frac{(q+V(y)+A(\eta))^{\frac{1}{2\kappa}}}{(q+V(x)+A(\xi))^{\frac{1}{2\kappa}}}\leq\]
 \beq\label{tempk4}\leq C\left(2+(q+V(x)+A(\xi))^{\frac{1}{2\gamma}}|x-y|+(2+(q+V(x)+A(\xi))^{\frac{1}{2\kappa}}|\xi-\eta|\right)^{N}.\eq
Now \eqref{tempk4} can be obtained from the inequalities below
\beq\label{tempk4d}\frac{(\qt+V(y))^{\frac{1}{2\kappa}}}{(q+V(x)+A(\xi))^{\frac{1}{2\kappa}}}\leq \frac{(\qt+V(y))^{\frac{1}{2\kappa}}}{(\qt+V(x))^{\frac{1}{2\kappa}}}\leq
 C\left(2+(q+V(x))^{\frac{1}{2\gamma}}|x-y|\right)^{N},\eq
 \beq\label{tempk4a}\frac{(\qt+A(\eta))^{\frac{1}{2\kappa}}}{(q+V(x)+A(\xi))^{\frac{1}{2\kappa}}}\leq \frac{(\qt+A(\eta))^{\frac{1}{2\kappa}}}{(\qt+A(\xi))^{\frac{1}{2\kappa}}}\leq
 C\left(2+(\frac{q}{2}+A(\xi))^{\frac{1}{2\kappa}})|\xi-\eta|\right)^{N}.\eq
In order to verify \eqref{tempk4d} we observe that, by choosing an integer $N$ such that $\frac{N}{2\gamma}\geq 1$, and applying the assumption (b) of Definition \ref{class1a} we obtain for a suitable $C>0$:
\[(\qt+V(y))^{\frac{1}{2\kappa}}\leq C\left(|x-y|+(\qt+V(x))^{\frac{1}{2\kappa}}\right)\leq \]\[ \leq C\left(1+(\qt+V(x))^{\frac{1}{2\gamma}}|x-y|\right)^{N}(1+V(x))^{\frac{1}{2\kappa}}\]
The proof of inequality \eqref{tempk4a} follows similarly, and the poof of Theorem \ref{contgk} is complete.
\end{proof}

We have shown that $g=g^{(A,V)}$ is a H\"ormander metric,thus the $S(M,g)$ classes are defined for any $g$-weight $M$ and a  Weyl-H\"ormader calculus is available to our disposal.

We now make some essential observations regarding our symbol $V(x)+A(\xi)$.
\begin{thm} Let $g=g^{(A,V)}$ be the metric defined  by \eqref{anhmet012x}. Then
\begin{itemize}
\item[(i)] $q+V(x)+A(\xi)$ is a $g$-weight.
\item[(ii)] There exists a regular weight $m$  equivalent to 
$M(X)=q+V(x)+A(\xi)$, i.e., we have $\left(\frac{m(Y)}{M(Y)}\right)^{\pm 1}\leq C$, for all $Y$, and $m\in S(m,g)$. Consequently,  $S(m,g)=S(M,g)$.
\end{itemize}
\end{thm}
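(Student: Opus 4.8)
The plan is to deduce both assertions with essentially no new computation: part (i) is a repackaging of the estimates already obtained in the proof of Theorem~\ref{contgk}, while part (ii) is an application of the standard regularization of weights in the Weyl--H\"ormander calculus. First I would record the symplectic dual of the metric \eqref{anhmet012x}: writing $M(X)=q+V(x)+A(\xi)$ and using the diagonal structure of $g$ together with the Cauchy--Schwarz inequality one gets, for $T=(t,\tau)$,
\[
g_X^\sigma(T)=M(X)^{\frac{1}{\gamma}}\,|t|^2+M(X)^{\frac{1}{\kappa}}\,|\tau|^2 ,
\]
so that $g_X^\sigma(X-Y)=M(X)^{\frac1\gamma}|x-y|^2+M(X)^{\frac1\kappa}|\xi-\eta|^2$; this is exactly the quantity that appears inside the parentheses in the temperateness part of the proof of Theorem~\ref{contgk}. (One also recognizes $M(X)=\lambda_g(X)^{2\kappa\gamma/(\kappa+\gamma)}$, which already makes (i) plausible, but I would argue directly.)

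For (i) I would check the two defining properties of a $g$-weight. The $g$-continuity of $M$ is immediate from Theorem~\ref{contgk}: its proof established that $g_X(X-Y)\le C^{-1}$ forces $\dfrac{q+V(x)+A(\xi)}{q+V(y)+A(\eta)}\le\widetilde C$, and interchanging the roles of $X$ and $Y$ (permissible once the continuity of $g$ itself is in hand, since then $g_X(X-Y)$ small implies $g_Y(X-Y)$ small) gives the two-sided estimate $\bigl(M(X)/M(Y)\bigr)^{\pm1}\le\widetilde C$. For the $g$-temperateness, raising the two displayed inequalities of the temperateness part of the proof of Theorem~\ref{contgk} to the powers $\kappa$ and $\gamma$ respectively yields
\[
\frac{M(Y)}{M(X)}\le C\Bigl(1+M(X)^{\frac1\gamma}|x-y|^2+M(X)^{\frac1\kappa}|\xi-\eta|^2\Bigr)^{N}=C\bigl(1+g_X^\sigma(X-Y)\bigr)^{N};
\]
dualizing the temperateness inequality \eqref{tempc1x} of the metric gives $1+g_X^\sigma(X-Y)\le\overline C\,(1+g_Y^\sigma(X-Y))^{J+1}$, hence $M(Y)/M(X)\le C'(1+g_Y^\sigma(X-Y))^{N(J+1)}$, and the opposite bound follows by symmetry. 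Thus $M$ is $g$-continuous and $g$-temperate, i.e.\ a $g$-weight.

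For (ii) I would invoke the regularization theorem for weights: given a H\"ormander metric $g$ and a $g$-weight $M$, there is a weight $m$ with $\bigl(m/M\bigr)^{\pm1}\le C$ and $m\in S(m,g)$ (one constructs $m$ by averaging $M$ over $g$-balls of a small fixed radius, iterating to remove the dependence of the ball on the base point; see \cite{Hor85}, \cite{Ler}, \cite{BL}). Applying this with $M=q+V(x)+A(\xi)$ --- a $g$-weight by (i) --- and $g=g^{(A,V)}$ --- a H\"ormander metric by Theorem~\ref{contgk} --- produces the regular weight $m$ claimed, with $(m(Y)/M(Y))^{\pm1}\le C$ for all $Y$. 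The identity $S(m,g)=S(M,g)$ then follows from the equivalence of $m$ and $M$: in the seminorms \eqref{inwhk} the weight enters only as an overall multiplicative factor while the metric is unchanged, so a symbol satisfies the $S(M,g)$-estimates if and only if it satisfies the $S(m,g)$-estimates, with comparable families of seminorms.

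The only genuine content here is part (i), and even there no new estimate is required: everything needed is already contained in the proof of Theorem~\ref{contgk}. The single point that needs care is the passage from $g_X^\sigma(X-Y)$ to $g_Y^\sigma(X-Y)$ in the temperateness of $M$, which costs only a fixed power and is precisely what the temperateness of the metric $g$ provides; part (ii) is a direct appeal to the standard weight-regularization machinery of the Weyl--H\"ormander calculus, so I do not anticipate a serious obstacle there.
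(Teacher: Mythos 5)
Your proof is correct and matches the paper's approach: the paper's proof of part (i) simply says ``a look at the proof of the continuity and temperateness of the metric $g$ shows that $q+V(x)+A(\xi)$ is a $g$-weight,'' and you supply exactly the details being alluded to (including the one point that needs care, converting $g_X^\sigma(X-Y)$ to $g_Y^\sigma(X-Y)$ via the temperateness of $g$ itself), while for (ii) you and the paper both invoke the standard regularization-of-weights theorem from \cite{Ler}, \cite{BL}, \cite{BC}. The only thing worth adding is that the paper also records a slicker self-contained route to (i), which you mention parenthetically but do not pursue: since $\lambda_g(X)=(q+V(x)+A(\xi))^{(\kappa+\gamma)/(2\kappa\gamma)}$ and the uncertainty parameter of any H\"ormander metric is automatically a $g$-weight (\cite{Hor85}), taking the power $2\kappa\gamma/(\kappa+\gamma)$ of $\lambda_g$ gives (i) immediately, with no re-examination of the estimates in Theorem~\ref{contgk} at all.
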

\begin{proof} (i) A look at the proof of the continuity and temperateness of the metric $g$ shows that $q+V(x)+A(\xi)$ is a $g$-weight. We also give a different and more general argument showing this fact.  

For the metric  $g=g^{(A,V)}$ the uncertainty parameter is given by 
\[\lambda_{g}(X)=(q+V(x)+A(\xi))^{\frac{\kappa+\gamma}{2\kappa\gamma}}.\]
In general the uncertainty parameter of a H\"ormander metric $G$ is a $G$-weight (cf. \cite{Hor85} ). So, by taking the power
$\frac{2\kappa\gamma}{\kappa+\gamma}$ of $\lambda_{g}$, we obtain that $q+V(x)+A(\xi)$ is a $g$-weight.
(ii)  For any weight $M$ one can construct an equivalent regular weight
  $\widetilde{M}$ as a consequence of the existence of suitable $g$-partitions of unity (cf. \cite{Ler}, \cite{BL}, \cite{BC}). Since $q+V(x)+A(\xi)$
is a $g$-weight, in particular, there exists a regular weight $m$  equivalent to  $M=q+V(x)+A(\xi)$. It is clear that $S(m,g)=S(M,g)$. 
\end{proof}
For some purposes it is enough to work with the equivalent weight $\widetilde{M}$. 
 In our case with the perspective of establishing Schatten-von Neumann properties we  will assume some further regularity conditions that will be introduced in Section \ref{sec:Schatten}.In particular, it will be assumed  that  $A(\xi)+V(x)$ is a symbol in the class $S(q+V(x)+A(\xi),g^{(A,V)} )$, a fact that holds for the main examples we are going to present.\\ 

\begin{ex}
As an example we observe the relativistic Schr\"odinger operator  $\sqrt{I-\Delta}+V(x)$ with $V$ satisfying  the Definition \ref{class1a} for some $\kappa>0$ and $V\in S(q+V(x)+A(\xi),g^{(A,V)} )$ with $A(D)=\sqrt{I-\Delta}$. In this case we have $\gamma=\half$ and 
\[\jp+V(x)\in S(\lambda_g^{\frac {\kappa}{\kappa +\half}},g^{(A,V)}),\]
i.e. the relativistic Schr\"odinger operator is of order $\frac {\kappa}{\kappa +\half}$ with respect to $g^{(A,V)}$. 
\end{ex}

\section{An intrinsic definition of the classes $S(\lambda_g^{m},g)$}
\label{SEC:anharmonic-notes}
Let $g=g^{(A,V)}$ be the H\"ormander metric  defined by \eqref{anhmet012x}. Since we have already a guarantee of the existence of a corresponding pseudo-differential calculus, we can now  define the class $S(\lambda_g^{m},g)$, for $m\in\ar$, in an equivalent and intrinsic  way without referring  explicitly to the metric $g$. 
\begin{defn}\label{DEF:sigmas} 
Let $m\in\ar$, and let $A(\xi), V(x)$ be a $\gamma$-function and a $\kappa$-function, respectively. For $a\in C^{\infty}(\ardn)$, we will say that $a\in\Sigma_{A,V}^m$, if for some $q>0$ large enough, and for all multi-indices $\alpha,\beta$ there exists a constant $C_{\alpha\beta}$ such that
 \beq\label{sigmacl}|\partial_{x}^{\beta}\partial_{\xi}^{\alpha}a(x,\xi)|\leq C_{\alpha\beta}(q+V(x)+A(\xi))^{m-\frac{|\beta|}{2\kappa}-\frac{|\alpha|}{2\gamma}}\,,\eq
 for all $x,\xi\in\mathbb R^n$.
\end{defn}
We point out that the definition above corresponds indeed to the  one for the metric $g^{(A,V)}$:
\begin{prop}\label{sht57} Let $m\in\ar$ and let $g=g^{(A,V)}$ be the metric defined by \eqref{anhmet012x}. Then 
\[\Sigma_{A,V}^m=S(\lambda_g^{m(\frac{2\kappa\gamma}{\kappa+\gamma})},g).\]
\end{prop}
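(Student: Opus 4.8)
The plan is to prove the set equality by identifying, for the metric $g=g^{(A,V)}$, the explicit form of $g_X^{1/2}$ and of the uncertainty parameter $\lambda_g$, and then matching the defining estimate \eqref{sigmacl} of $\Sigma_{A,V}^m$ against the intrinsic estimate \eqref{inwhk} that defines $S(M,g)$ with $M=\lambda_g^{m(2\kappa\gamma/(\kappa+\gamma))}$. Recall from the proof of Theorem \ref{contgk} that
\[\lambda_g(X)=(q+V(x)+A(\xi))^{\frac{\kappa+\gamma}{2\kappa\gamma}},\]
so that $\lambda_g(X)^{m\frac{2\kappa\gamma}{\kappa+\gamma}}=(q+V(x)+A(\xi))^{m}$. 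Thus the right-hand weight $M(X)$ in \eqref{inwhk} is exactly $(q+V(x)+A(\xi))^m$, which already appears in \eqref{sigmacl} with the extra power $-\frac{|\beta|}{2\kappa}-\frac{|\alpha|}{2\gamma}$; the whole content of the proposition is that this extra power is precisely what the product $\prod g_X^{1/2}(T_i)$ contributes.

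First I would record that for the diagonal metric \eqref{anhmet012x}, if $T=(t,\tau)$ then
\[g_X(T)=\frac{|t|^2}{(q+V(x)+A(\xi))^{1/\kappa}}+\frac{|\tau|^2}{(q+V(x)+A(\xi))^{1/\gamma}},\]
so in the coordinate directions $g_X^{1/2}(e_{x_j})=(q+V(x)+A(\xi))^{-1/(2\kappa)}$ and $g_X^{1/2}(e_{\xi_j})=(q+V(x)+A(\xi))^{-1/(2\gamma)}$. For the direction $\Longleftarrow$: if $a\in S(M,g)$ with $M=(q+V(x)+A(\xi))^m$, then applying \eqref{inwhk} to the $k=|\alpha|+|\beta|$ unit vectors $e_{x}$ (repeated $|\beta|$ times) and $e_{\xi}$ (repeated $|\alpha|$ times) gives exactly the bound $|\partial_x^\beta\partial_\xi^\alpha a(x,\xi)|\le C_k\,(q+V(x)+A(\xi))^{m-|\beta|/(2\kappa)-|\alpha|/(2\gamma)}$, i.e. \eqref{sigmacl}. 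For the direction $\Longrightarrow$: given $a\in\Sigma_{A,V}^m$ and arbitrary vectors $T_1,\dots,T_k$, I expand $a^{(k)}(X;T_1,\dots,T_k)$ multilinearly in the standard basis, bound each partial derivative via \eqref{sigmacl}, and recombine; each factor $|t^{(i)}_j|$ (resp.\ $|\tau^{(i)}_j|$) coming from a configuration (resp.\ dual) component of $T_i$ pairs with a factor $(q+V+A)^{-1/(2\kappa)}$ (resp.\ $(q+V+A)^{-1/(2\gamma)}$) from the exponent in \eqref{sigmacl}, and summing over the $2n$ components of each $T_i$ reconstructs (up to a dimensional constant) the factor $g_X^{1/2}(T_i)$ by equivalence of the $\ell^1$ and $\ell^2$ norms on $\mathbb R^{2n}$. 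This yields \eqref{inwhk} with $M(X)=(q+V(x)+A(\xi))^m=\lambda_g^{m(2\kappa\gamma/(\kappa+\gamma))}$.

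The only genuinely non-routine point is a bookkeeping subtlety rather than a deep obstacle: since the exponent $m-\frac{|\beta|}{2\kappa}-\frac{|\alpha|}{2\gamma}$ in \eqref{sigmacl} depends on the multi-index, one must make sure that in the multilinear expansion every monomial $\prod_i |(T_i)_{c_i}|$ (where $c_i$ ranges over the $2n$ coordinates) is matched with a derivative whose total order in the $x$-variables equals the number of configuration-type factors picked and likewise for $\xi$; this is automatic from how the chain rule distributes directional derivatives, but it is worth stating explicitly so the powers of $(q+V(x)+A(\xi))$ line up. Finally I would note that $M=(q+V(x)+A(\xi))^m$ is a genuine $g$-weight (it is a power of the $g$-weight $q+V(x)+A(\xi)$ established in the previous theorem, and powers of $g$-weights are $g$-weights), so $S(M,g)$ is well defined and the stated identity $\Sigma_{A,V}^m=S(\lambda_g^{m(2\kappa\gamma/(\kappa+\gamma))},g)$ holds, completing the proof.
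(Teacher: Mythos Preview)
Your proposal is correct and follows essentially the same approach as the paper: identify $\lambda_g^{m(2\kappa\gamma/(\kappa+\gamma))}=(q+V(x)+A(\xi))^m$, obtain the inclusion $S(M,g)\subset\Sigma_{A,V}^m$ by evaluating \eqref{inwhk} on coordinate directions, and obtain the reverse inclusion by the multilinear expansion of $a^{(k)}$ in the standard basis. The paper's proof is terser---it simply says ``by \eqref{sigmacl}'' for one inclusion and ``by taking canonical directional derivatives'' for the other---so your version spells out more of the bookkeeping (the pairing of component factors with the powers of $(q+V+A)$ and the $\ell^1$/$\ell^2$ equivalence), but there is no genuine difference in strategy.
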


\begin{proof} Observe that for  $g=g^{(A,V)}$  we have  
\[q+V(x)+A(\xi)\sim \lambda_{g}^{\frac{2\kappa\gamma}{\kappa+\gamma}}.\] Therefore, by \eqref{sigmacl}, we have
\[S(\lambda_g^{m(\frac{2\kappa\gamma}{\kappa+\gamma})},g)\supset \Sigma_{A,V}^m.\] 
	On the other hand, if $a \in C^{\infty}(\arn \times \arn) \in S(\lambda_g^{m(\frac{2\kappa\gamma}{\kappa+\gamma})},g)$, then by Definition \ref{inwhk} and by taking canonical directional derivatives for  every pair of  multi-indices $\alpha,\beta$ there exists a constant $C_{\alpha\beta}>0$ such that
	
\[|\partial_{x}^{\beta}\partial_{\xi}^{\alpha}a(x,\xi)|\leq C_{\alpha\beta}(q+V(x)+A(\xi))^{m-\frac{|\beta|}{2\kappa}-\frac{|\alpha|}{2\gamma}}\,,\]
for all $x,\xi\in\mathbb R^n$.	\\
Therefore $S(\lambda_g^{m(\frac{2\kappa\gamma}{\kappa+\gamma})},g) \subset \Sigma_{A,V}^m$, and this completes the proof.
\end{proof}
We  now consider the composition formula in the setting of the classes $\Sigma_{A,V}^m$ as a consequence of the corresponding one in the $S(M,g)$ calculus. 
\begin{thm}\label{thm.comp} Let $m_1, m_2\in\ar$. If $a\in \Sigma_{A,V}^{m_1}, b\in\Sigma_{A,V}^{m_2}$. There exists $c\in \Sigma_{A,V}^{m_1+m_2}$ such that $a(x,D)\circ b(x,D)=c(x,D)$ and
\[c(x,\xi)\sim \sum\limits_{\alpha}(2\pi i)^{-|\alpha|}\partial_{\xi}^{\alpha}a(x,\xi)\partial_{x}^{\alpha}b(x,\xi),\]
i.e., for all $N\in\ene$
\[c(x,\xi)- \sum\limits_{|\alpha|<N}(2\pi i)^{-|\alpha|}\partial_{\xi}^{\alpha}a(x,\xi)\partial_{x}^{\alpha}b(x,\xi)\in \Sigma_{k,\ell}^{m_1+m_2-N\left(\frac{\kappa+\gamma}{2\kappa\gamma}\right)} .\]
\end{thm}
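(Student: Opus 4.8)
The plan is to reduce the statement to the composition theorem of the Weyl--H\"ormander $S(M,g)$ calculus by means of the identification established in Proposition \ref{sht57}, and then translate the resulting asymptotic expansion and its remainder estimates back into the scale of the classes $\Sigma_{A,V}^m$. First I would fix the dictionary between the two scales. Writing $\theta:=\frac{2\kappa\gamma}{\kappa+\gamma}$, Theorem \ref{contgk} tells us that $g=g^{(A,V)}$ is a H\"ormander metric; the computation already performed in the excerpt gives $q+V(x)+A(\xi)\sim \lambda_g^{\theta}$, and since the metric is split, its Planck function satisfies $h_g=\lambda_g^{-1}$. By Proposition \ref{sht57} this means $\Sigma_{A,V}^{m}=S(\lambda_g^{m\theta},g)=S((q+V(x)+A(\xi))^{m},g)$; in particular $M_i:=(q+V(x)+A(\xi))^{m_i}$ are $g$-weights for $i=1,2$, and the hypotheses become $a\in S(M_1,g)$, $b\in S(M_2,g)$.

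Next I would invoke H\"ormander's composition theorem for the Weyl--H\"ormander calculus (\cite{Hor85}, Section 18.5; see also \cite{Ler}, \cite{BL}): because $g$ is a H\"ormander metric and $M_1,M_2$ are $g$-weights, the operator $a(x,D)\circ b(x,D)$ is again a pseudo-differential operator, with symbol $c\in S(M_1M_2,g)$, and $c$ admits an asymptotic expansion whose successive groups of terms gain a factor $h_g=\lambda_g^{-1}$ in the weight. Concretely, for each $N\in\ene$ the remainder after the terms of order $<N$ lies in $S(M_1M_2 h_g^{N},g)=S(\lambda_g^{(m_1+m_2)\theta-N},g)$. Since $M_1M_2=(q+V(x)+A(\xi))^{m_1+m_2}$ we first get $c\in\Sigma_{A,V}^{m_1+m_2}$, and since $\lambda_g^{(m_1+m_2)\theta-N}=(q+V(x)+A(\xi))^{m_1+m_2-N/\theta}$ with $1/\theta=\frac{\kappa+\gamma}{2\kappa\gamma}$, the remainder class is exactly $\Sigma_{A,V}^{m_1+m_2-N(\frac{\kappa+\gamma}{2\kappa\gamma})}$, which is what the theorem asserts.

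It then remains to identify the abstract expansion with the explicit Kohn--Nirenberg series $\sum_{\alpha}(2\pi i)^{-|\alpha|}\partial_\xi^{\alpha}a\,\partial_x^{\alpha}b$. For the $t=1$ (Kohn--Nirenberg) quantization used in the statement this is the classical composition formula; to put it inside the $S(M,g)$ calculus I would either use the form of H\"ormander's theorem phrased directly for the $t$-quantization, or pass between Weyl and Kohn--Nirenberg symbols via the exponential-of-a-quadratic operator $e^{\pm i\langle D_x,D_\xi\rangle/2}$-type map, which is continuous on every $S(M,g)$ and alters a symbol only by strictly lower-order terms, exactly as noted for our operators in Section \ref{SEC:anharmonic}. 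Either way, the term with $|\alpha|=j$ has order reduced by $j$ in powers of $\lambda_g$, hence by $j/\theta=j\frac{\kappa+\gamma}{2\kappa\gamma}$ in the $\Sigma_{A,V}$ scale, so it lies in $\Sigma_{A,V}^{m_1+m_2-j(\frac{\kappa+\gamma}{2\kappa\gamma})}$, consistently with the claimed remainder bound.

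I expect the only genuinely delicate point to be the \emph{bookkeeping}: matching the abstract directional-derivative expansion produced by the Weyl--H\"ormander composition theorem with the multi-index series above, and tracking the precise order gain per term under the correspondence $q+V(x)+A(\xi)\sim\lambda_g^{\theta}$, $h_g=\lambda_g^{-1}$. Everything else—well-definedness of $c$, its membership in $S(M_1M_2,g)$, and the remainder estimates—is a direct transcription of the cited result once this dictionary is in place, so no new analysis beyond Theorem \ref{contgk} and Proposition \ref{sht57} is needed.
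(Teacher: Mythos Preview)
Your proposal is correct and follows precisely the route the paper indicates: the paper states this theorem without proof, presenting it explicitly ``as a consequence of the corresponding one in the $S(M,g)$ calculus,'' and your argument carries out exactly that reduction via Proposition~\ref{sht57}, the identification $q+V(x)+A(\xi)\sim\lambda_g^{2\kappa\gamma/(\kappa+\gamma)}$, and the standard Weyl--H\"ormander composition theorem. The bookkeeping you flag (translating the $h_g^N$ gain into the exponent drop $N\frac{\kappa+\gamma}{2\kappa\gamma}$ in the $\Sigma_{A,V}$ scale) is handled correctly.
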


The  $L^2$ boundedness of operators in the class $\Sigma_{A,V}^{0}$ is celarly obtained from the corresponding one for the $S(1,g)$ class.
\begin{thm} Let $a\in\Sigma_{A,V}^{0}$. Then, $a(x,D)$ extends to a bounded operator $a(x,D):L^2(\arn)\rightarrow L^2(\arn)$. 
\end{thm}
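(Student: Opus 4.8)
The plan is to reduce the statement to the Calder\'on--Vaillancourt type $L^{2}$-boundedness theorem of the Weyl--H\"ormander calculus. First I would apply Proposition \ref{sht57} with $m=0$: since $\lambda_g^{0}\equiv 1$, it identifies
\[
\Sigma_{A,V}^{0}=S(\lambda_g^{0},g)=S(1,g),
\]
where $g=g^{(A,V)}$ is the H\"ormander metric provided by Theorem \ref{contgk}. Thus the problem becomes the $L^{2}$-boundedness of operators with symbol in the class $S(1,g)$.

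Next, since the theorem is phrased with the Kohn--Nirenberg quantization $a(x,D)$ whereas the cleanest boundedness statement is for the Weyl quantization, I would pass to the Weyl symbol. Using the change-of-quantization formula within the $S(M,g)$ calculus --- available by the discussion following Theorem \ref{contgk}, cf.\ \cite{Hor85}, \cite{Ler}, \cite{BL} --- one writes $a(x,D)=b^{w}(x,D)$ with $b\in C^{\infty}(\ardn)$ having asymptotic expansion $b\sim a+\sum_{k\geq 1} b_k$, where $b_k\in\Sigma_{A,V}^{-k(\kappa+\gamma)/(2\kappa\gamma)}$. Since $\lambda_g\geq 1$ (the uncertainty inequality checked inside the proof of Theorem \ref{contgk}), each $b_k$ lies in $S(1,g)$, and hence $b\in S(1,g)$ as well. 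This is precisely the quantization invariance already noted in the paper for the symbols under consideration.

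Finally I would quote the boundedness theorem of the Weyl--H\"ormander calculus: if $g$ is a H\"ormander metric and $b\in S(1,g)$, then $b^{w}(x,D)$ extends to a bounded operator on $L^{2}(\arn)$, with operator norm controlled by finitely many of the seminorms $\|b\|_{k,S(1,g)}$; this is \cite[Theorem 18.6.3]{Hor85} (see also \cite{Ler}). Combined with the two steps above, this proves the claim.

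I do not anticipate a genuine obstacle: the constant function $1$ is trivially a $g$-weight, and the uncertainty bound $\lambda_g\geq 1$ needed by the cited theorem has already been established, so its hypotheses hold verbatim. The only mildly technical ingredient is the change of quantization, and that is routine once the $S(M,g)$ calculus is in place. Should one prefer a self-contained argument rather than citing \cite{Hor85}, one would run the Cotlar--Stein almost-orthogonality scheme against a $g$-adapted partition of unity of the phase space; but that just reproduces the standard proof of the cited theorem, so there is no reason to repeat it here.
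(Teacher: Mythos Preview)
Your proposal is correct and follows exactly the route the paper intends: the statement is presented in the paper without a detailed proof, preceded only by the remark that the $L^2$ boundedness of operators in $\Sigma_{A,V}^{0}$ is ``clearly obtained from the corresponding one for the $S(1,g)$ class.'' Your reduction via Proposition~\ref{sht57} to $S(1,g)$ and the appeal to \cite[Theorem 18.6.3]{Hor85} is precisely this; the change-of-quantization discussion is a little more detailed than strictly necessary (the $S(M,g)$ boundedness result is quantization-independent), but it is not wrong.
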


\section{Schatten-von Neumann classes and spectral properties}\label{sec:Schatten}

We are now ready to study some properties of negative powers of operators of the form $q+A(D)+V(X)$ in relation with their behaviour in the Schatten-von Neumann classes of operators. The main aim of this section is to provide a simple proof for the main term of the spectral asymptotics of  these operators.  In the context of the Weyl-H\"ormander calculus it is useful to recall the following result by Toft  \cite{Tof06}. See also  
 \cite{BT} for further developments. 
\begin{thm}\label{toft} Let  $g$ be a split H\"ormander metric, and let $M$ be a $g$-weight. For $1\leq r <\infty$, if $h_g^{\frac{k}{2}}M\in L^r(\ardn)$ for some $k\geq 0$, then, for any $a\in S(M,g)$, we have 
\[a_t(x,D)\in S_r(L^2(\arn)), \,\,\mbox{ for all }t\in\ar.\] 
\end{thm}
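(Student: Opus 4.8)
The plan is to prove Theorem~\ref{toft} by the standard phase-space localisation (``confinement'') technique of the Weyl--H\"ormander calculus, reducing the global Schatten bound to elementary bounds on operators localised at the Planck scale. First I would reduce to the Weyl quantisation: for a H\"ormander metric $g$ all $t$-quantisations of a symbol $a\in S(M,g)$ agree, modulo the calculus, with Weyl quantisations of symbols in the \emph{same} class, i.e.\ there is $b=b(t,a)\in S(M,g)$, depending continuously on $a$, with $a_t(x,D)=b^w(x,D)$; since $S_r$-membership depends only on the operator it suffices to treat $t=\tfrac12$. Next, using that $g$ is a H\"ormander metric and $M$ a $g$-weight, I would fix a $g$-lattice $(X_j)_j\subset\ardn$ together with a confined partition of unity $1=\sum_j\varphi_j$, each $\varphi_j$ confined to the $g_{X_j}$-unit ball, so that $a_j:=\varphi_j a$ is confined to that ball with constants $\lesssim M(X_j)\,\|a\|_{N,S(M,g)}$ for a fixed $N$, and so that the balls $\{g_{X_j}(X-X_j)\le r_0\}$ cover $\ardn$ with bounded overlap. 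The number of indices $j$ with $X_j$ in a slowly varying region $\Omega$ is then comparable to $\int_\Omega |B_g(X)|^{-1}\,dX$, where $|B_g(X)|$ is the symplectic (Lebesgue) volume of the $g_X$-unit ball; for a \emph{split} metric $|B_g(X)|\sim h_g(X)^{-n}$, and this is the one place the splitting hypothesis is used.

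The core is the single-piece estimate. For each $j$ I would conjugate $a_j^w$ by the metaplectic operator $U_j$ implementing the linear symplectic map putting $g_{X_j}$ into Williamson normal form; this conjugation is unitary on $L^2$, hence isometric on every $S_r$, and by the symplectic covariance of the Weyl quantisation it replaces $a_j^w$ by $\widetilde a_j^{\,w}$, where $\widetilde a_j$ is confined, for the normal-form metric, with size $M(X_j)$ and essential support in an ellipsoid of volume $|B_g(X_j)|$. The Calder\'on--Vaillancourt bound gives $\|\widetilde a_j^{\,w}\|_{S_\infty}\lesssim M(X_j)$, while the classical trace-class criterion applied to a symbol with bounded derivatives essentially supported on a set of volume $|B_g(X_j)|$ gives $\|\widetilde a_j^{\,w}\|_{S_1}\lesssim M(X_j)\,|B_g(X_j)|$; complex interpolation along the Schatten scale then yields $\|a_j^w\|_{S_r}=\|\widetilde a_j^{\,w}\|_{S_r}\lesssim M(X_j)\,|B_g(X_j)|^{1/r}$, with an implicit constant controlled by a single seminorm of $a$.

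Finally I would reassemble. The pieces $a_j^w$ are almost orthogonal: the confinement of $a_j,a_k$ to far-apart $g$-balls forces $\|(a_j^w)^*a_k^w\|$ and $\|a_j^w(a_k^w)^*\|$ to decay rapidly in $g_{X_k}^{\sigma}(X_j-X_k)$, and a Cotlar--Stein-type lemma for the $S_r$ norms ($1\le r<\infty$) gives $\|a^w\|_{S_r}^r\lesssim\sum_j\|a_j^w\|_{S_r}^r$. Inserting the single-piece bound and passing to an integral with the counting density of the first step,
\[
\|a^w\|_{S_r}^r\ \lesssim\ \sum_j M(X_j)^r\,|B_g(X_j)|\ \lesssim\ \int_{\ardn} M(X)^r\,|B_g(X)|\,\frac{dX}{|B_g(X)|}\ =\ \int_{\ardn} M(X)^r\,dX,
\]
which proves the assertion when $M\in L^r$, i.e.\ the case $k=0$. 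Since $h_g\le 1$, the condition $h_g^{k/2}M\in L^r$ for some $k\ge0$ is a (weaker) integrability hypothesis of the same type, and it is precisely what makes the summed single-piece bound finite once one retains, in the trace-class criterion, the factors $h_g^{1/2}$ gained at each differentiation together with the sharpest admissible counting measure; carrying out this bookkeeping gives $\|a^w\|_{S_r}^r\lesssim\|h_g^{k/2}M\|_{L^r}^r<\infty$.

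The step I expect to be the main obstacle is the pair of quantitative inputs above: (i) obtaining the single-piece Schatten bound with the correct power of the Planck function, which forces one to exploit the full confinement (rapid off-ball decay) and the anisotropy of the Williamson normal form rather than a lossy $L^1$ estimate on the symbol, and (ii) the $S_r$-version of the Cotlar--Stein reassembly, where the near-orthogonality of the pieces must be controlled uniformly in $r\in[1,\infty)$. By contrast, the change of quantisation and the existence of the $g$-lattice partition of unity are standard parts of the Weyl--H\"ormander calculus and would be quoted rather than reproved.
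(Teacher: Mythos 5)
Theorem~\ref{toft} is not proved in this paper: it is quoted as a result of Toft \cite{Tof06} (with \cite{BT} cited for further developments) and is used as a black box---in fact only with $k=0$, in the proof of Theorem~\ref{sch1mf}. There is therefore no in-paper argument to compare your sketch against.

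Assessed on its own terms, the plan (confined $g$-lattice partition of unity, single-piece interpolation between $S_\infty$ and $S_1$, almost-orthogonal reassembly, counting against the phase-space volume of the $g$-balls) is a legitimate and familiar route to Schatten criteria of this type. But as written it only delivers the $k=0$ case. Interpolating $\|a_j^w\|_{S_\infty}\lesssim M(X_j)$ against $\|a_j^w\|_{S_1}\lesssim M(X_j)|B_g(X_j)|$ gives $\|a_j^w\|_{S_r}\lesssim M(X_j)|B_g(X_j)|^{1/r}$, and after the sum-to-integral step this yields $\|a^w\|_{S_r}^r\lesssim\|M\|_{L^r}^r$; nothing in the sketch converts this into $\lesssim\|h_g^{k/2}M\|_{L^r}^r$ for $k>0$. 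Your closing claim that ``retaining the $h_g^{1/2}$ factors gained at each differentiation'' supplies the improvement is an assertion, not an argument: the single-piece bound you set up does not record those factors, and trading integrability of $M$ for positive powers of $h_g$ is precisely the non-trivial content of the theorem (it is what makes it useful when $1\le r<2$, where the naive symbol-integrability criterion is not enough). In \cite{Tof06} and \cite{BT} that gain is obtained by modulation-space / short-time-Fourier-transform estimates, a genuinely different device from a phase-space Cotlar--Stein decomposition, and one should not expect the confinement route to produce it without a substantially different input. Separately, the $S_r$-version of Cotlar--Stein, $\|\sum_j T_j\|_{S_r}^r\lesssim\sum_j\|T_j\|_{S_r}^r$ uniformly in $r\in[1,\infty)$, is not an off-the-shelf lemma: for $r=1$ it is the triangle inequality and for $r=2$ it follows from near-disjoint supports, but for the remaining $r$ you would need to make the interpolation or kernel argument explicit before it can be invoked.
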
 
In order to obtain the desired spectral properties for the negative powers of our operator $q+A(D)+V(x)$, we are going to assume two additional conditions, that hold for relevant cases, and in particular for our main examples. Precisely, the following definition introduces these extra assumptions: 

\begin{defn}\label{tfcond1}
We define the following conditions on the pair of functions $(A(x),V(\xi))$:\\

 \begin{itemize} 
 \item[(C1)]  $A,V$ are $C^{\infty}$ functions on $\Rn$ and $A(\xi)+V(x)\in \Sigma_{A,V}^{1}.$ 
 \item[(C2)] There exist $\delta>0$ and $C>0$ such that $q+V(x)+A(\xi)\geq C(1+|x|+|\xi|)^{\delta}$. 

 \item[(C3)] There exists $\mu>0$  such that 
 \beq\label{intcf8}\int\limits_{\arn}\int\limits_{\arn}(q+V(x)+A(\xi))^{-\mu}<\infty.\eq 
 \end{itemize}
\end{defn}
\begin{rem} Let us highlight, that condition (C2) implies condition (C3); indeed, one can choose $\mu >\frac{2n}{\delta}$ in (C3), where $\delta$ is the one appearing in (C2). In particular, given condition (C1), condition (C2) implies, one the one hand, that the symbol $q+V(x)+A(\xi)$ is $g$-elliptic with respect to the metric $g=g^{(A,V)}$ in the sense of  \cite{BN}, and, on the other hand, the integrability condition \eqref{intcf8}. However, condition \eqref{intcf8} can also be achieved for $\mu\leq \frac{2n}{\delta}$. This is why we  chose to refer to condition (C3) explicitly, and not  to include it in condition (C2). In particular, as we will see for the fractional relativistic Schr\"odinger operators $(I-\Delta)^\gamma+\langle x \rangle^{2k}$,  one can choose in condition (C3) some $\mu>\mu_0$, where $\mu_0=\frac{n(\kappa+\gamma)}{2\kappa\gamma}.$ For this type of operators the natural choice for  $\delta$ is $\delta=\min\{2\kappa, 2\gamma\}$ in order to guarantee (C2). Now, comparing the lower bounds, $\frac{2n}{\delta}$ and $\frac{n(\kappa+\gamma)}{2\kappa\gamma}$, for $\mu$, we see that, for $\kappa\neq \gamma$, we have that   
\[\frac{2n}{\delta}=\frac{2n}{\min\{2\kappa, 2\gamma\}}>\frac{n(\kappa+\gamma)}{2\kappa\gamma}.\]\\
In the case where $\kappa=\gamma$, the two lower bounds are identical. 

Therefore the exponent $\mu_0=\frac{n(\kappa+\gamma)}{2\kappa\gamma},$  gives a better condition for $\mu$ in (C3) and moreover one can show that it is sharp; see Theorem \ref{sch1mfff} and Remark \ref{rem.sharp}.
\end{rem}

Regarding the condition (C1), we give a mild sufficient condition guaranteeing such membership.

\begin{lem}\label{lemsplit9} Let $m\in\ar$, and let $A, V$ be a $\gamma$-function and a $\kappa$-function, respectively. Let $B=B(x), W=W(\xi)$ be $C^{\infty}$ complex-valued functions defined on $\Rn$ such that:\\

(i) for all multi-indices $\beta$ there exists a constant $C_{\beta}$ such that
 \beq\label{sigy7}|\partial_{x}^{\beta}B(x)|\leq C_{\beta}(q+V(x)+A(\xi))^{m-\frac{|\beta|}{2\kappa}}\, ,\mbox{ for all } x\in\mathbb R^n\,;\eq

(ii)  for all multi-indices $\alpha$ there exists a constant $C_{\alpha}$ such that
 \beq\label{sigy9}|\partial_{\xi}^{\alpha}W(\xi)|\leq C_{\alpha}(q+V(x)+A(\xi))^{m-\frac{|\alpha|}{2\gamma}}\, ,\mbox{ for all } \xi\in\mathbb R^n.\eq

Then, $W(\xi)+B(x)\in \Sigma_{A,V}^{m}$.
\end{lem}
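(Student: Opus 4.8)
The plan is to verify the defining estimates \eqref{sigmacl} of $\Sigma_{A,V}^m$ for the symbol $a(x,\xi):=W(\xi)+B(x)$ directly, using nothing beyond linearity of differentiation and the one–variable hypotheses \eqref{sigy7}--\eqref{sigy9} (which, since their right-hand sides involve both $A(\xi)$ and $V(x)$, are understood to hold for all $x,\xi\in\Rn$). Fix multi-indices $\alpha,\beta$. Writing
\[\partial_{x}^{\beta}\partial_{\xi}^{\alpha}a(x,\xi)=\partial_{x}^{\beta}\partial_{\xi}^{\alpha}W(\xi)+\partial_{x}^{\beta}\partial_{\xi}^{\alpha}B(x),\]
the key observation is that $\partial_{\xi}^{\alpha}W$ depends on $\xi$ only, so $\partial_{x}^{\beta}\partial_{\xi}^{\alpha}W(\xi)=0$ as soon as $\beta\neq 0$, and symmetrically $\partial_{x}^{\beta}\partial_{\xi}^{\alpha}B(x)=0$ as soon as $\alpha\neq 0$. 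Hence at most one summand survives, and the verification splits into four cases according to the vanishing of $\alpha$ and $\beta$.

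When $\alpha=\beta=0$, the triangle inequality together with \eqref{sigy9} and \eqref{sigy7} (applied with $\alpha=0$ and $\beta=0$ respectively) gives $|a(x,\xi)|\le C(q+V(x)+A(\xi))^{m}$, with $C$ the sum of the two corresponding constants, which is \eqref{sigmacl} in this case. When $\beta=0$ and $\alpha\neq 0$ we have $\partial_{\xi}^{\alpha}a=\partial_{\xi}^{\alpha}W$, and \eqref{sigy9} gives exactly $|\partial_{\xi}^{\alpha}a|\le C_{\alpha}(q+V(x)+A(\xi))^{m-\frac{|\alpha|}{2\gamma}}$, which is the required exponent $m-\frac{|\beta|}{2\kappa}-\frac{|\alpha|}{2\gamma}$ because $|\beta|=0$. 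The case $\alpha=0$, $\beta\neq 0$ is symmetric, using $\partial_{x}^{\beta}a=\partial_{x}^{\beta}B$ and \eqref{sigy7}. Finally, if $\alpha\neq 0$ and $\beta\neq 0$ then $\partial_{x}^{\beta}\partial_{\xi}^{\alpha}a\equiv 0$ and the estimate is trivial, since $q$ is chosen so that $q+V(x)+A(\xi)\ge 1$ and the right-hand side of \eqref{sigmacl} is therefore positive.

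Collecting the constants ($C$ in the first case, $C_{\alpha}$ or $C_{\beta}$ in the mixed–zero cases, and $0$ otherwise) yields the family $\{C_{\alpha\beta}\}$ witnessing $W(\xi)+B(x)\in\Sigma_{A,V}^m$. There is essentially no obstacle here: the argument is pure bookkeeping, and the only thing that must be noticed is that a genuinely mixed derivative annihilates a \emph{split} symbol $W(\xi)+B(x)$, which is precisely why the separate one-variable bounds on $B$ and $W$ suffice to control every derivative of their sum. (One could equivalently route the argument through Proposition \ref{sht57}, checking membership in the corresponding $S(M,g)$ class, but the direct verification of \eqref{sigmacl} above is the most economical.)
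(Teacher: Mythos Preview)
Your proof is correct and follows essentially the same approach as the paper's: the paper observes that assumptions (i) and (ii) place $B$ and $W$ individually in $\Sigma_{A,V}^{m}$ (which amounts precisely to your observation that the ``missing'' derivatives vanish because $B$ depends only on $x$ and $W$ only on $\xi$), and then invokes closure of the class under sums. You have merely unfolded both steps into a single direct verification of \eqref{sigmacl}, making the case analysis explicit.
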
 
\begin{proof} The assumptions (i) and (ii) ensure that $B$ and $W$ belong to $\Sigma_{A,V}^{m}$ according to Definition \ref{DEF:sigmas}. Hence, by the general theory, we also have $W(\xi)+B(x)\in \Sigma_{A,V}^{m}$.
\end{proof}
\begin{ex}
We note that the pairs $(A(x),V(\xi))$ listed in {\em Example} \ref{exsy3},   also  satisfy the conditions given by Definition \ref{tfcond1}. Indeed, the condition (C1) follows from Lemma \ref{lemsplit9}, while the condition (C2) holds by choosing $\delta=\min\{2\kappa, 2\gamma\}$. Condition (C3) then follows from condition (C2) as explained above.
For instance, let us consider the fractional relativistic Schr\"odinger operator $(I-\Delta)^{\gamma}+\jpx^{2\kappa}$,  where $\gamma >0$. We take 
$A(\xi)=\jp^{2\gamma}, V(x)=\jpx^{2\kappa}$ and  observe that  $V(x), A(\xi)$ satisfy (i) and (ii), respectively, in Lemma \ref{lemsplit9} with $m=1$. Therefore  $\jp^{2\gamma}+\jpx^{2\kappa}\in \Sigma_{\jp^{2\gamma},\jpx^{2\kappa}}^{1}$. 
\end{ex}
\medskip

\noindent We can now formulate some consequences for Schatten-von Neumann classes:
\begin{thm}\label{sch1mf} Let $g=g^{(A,V)}$ be the metric defined  in \eqref{anhmet012x},  $1\leq r<\infty$, and assume that (C3) holds for some $\mu>0$ as in \eqref{intcf8}.  Then, for $a\in S((q+V(x)+A(\xi))^{-\frac{\mu}{r}},g)$, we have
 \[a_t(x,D)\in S_r(L^2(\arn)), \,\,\mbox{ for all }t\in\ar.\]
\end{thm}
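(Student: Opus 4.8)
The plan is to deduce Theorem \ref{sch1mf} directly from Toft's result, Theorem \ref{toft}, by choosing the $g$-weight $M$ and the parameter $k$ appropriately. First I would recall that for the metric $g=g^{(A,V)}$ the Planck function is
\[
h_g(X) = \lambda_g(X)^{-1} = (q+V(x)+A(\xi))^{-\frac{\kappa+\gamma}{2\kappa\gamma}},
\]
which follows from the expression for $\lambda_g$ computed in the proof of Theorem \ref{contgk}. Next I would verify that $g^{(A,V)}$ is a \emph{split} H\"ormander metric in the sense required by Theorem \ref{toft}: this is immediate from the form \eqref{anhmet012x}, since the metric is a direct sum of a form in $dx$ depending only on the scalar $q+V(x)+A(\xi)$ and a form in $d\xi$ depending on the same scalar, so it is already in diagonal (split) form in the $(x,\xi)$-splitting of the phase space. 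I would also recall that $q+V(x)+A(\xi)$ is a $g$-weight (established in the theorem following Theorem \ref{contgk}), hence so is any real power of it; in particular $M:=(q+V(x)+A(\xi))^{-\mu/r}$ is a $g$-weight.

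The key computation is then to check the integrability hypothesis of Theorem \ref{toft} with this $M$ and a suitable $k\ge 0$. We have
\[
h_g(X)^{\frac{k}{2}} M(X) = (q+V(x)+A(\xi))^{-\frac{k(\kappa+\gamma)}{4\kappa\gamma}-\frac{\mu}{r}},
\]
and I would choose $k$ to be any value with $k\ge 0$ — in fact $k=0$ already works, since then $h_g^{k/2}M = (q+V(x)+A(\xi))^{-\mu/r}$. Raising to the $r$-th power, $\big(h_g^{k/2}M\big)^r = (q+V(x)+A(\xi))^{-\mu}$, which is exactly the function assumed integrable over $\ardn$ in hypothesis (C3), see \eqref{intcf8}. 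Hence $h_g^{k/2}M \in L^r(\ardn)$ with $k=0\ge 0$, and Theorem \ref{toft} applies verbatim: any $a\in S(M,g) = S((q+V(x)+A(\xi))^{-\mu/r},g)$ has $a_t(x,D)\in S_r(L^2(\arn))$ for every $t\in\ar$. This gives the claim.

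The only genuine point requiring care — and the step I expect to be the main obstacle — is the verification that $g^{(A,V)}$ is \emph{split}, since Toft's theorem is stated under that restriction; but this is a structural observation about the metric \eqref{anhmet012x} rather than a computation, as the $dx$ and $d\xi$ blocks are manifestly uncoupled (each is a scalar multiple of the Euclidean form). A secondary bookkeeping point is to note that in Theorem \ref{toft} one is free to take any $k\ge 0$, so choosing $k=0$ is legitimate and no positive power of the Planck function is actually needed here; if one prefers, any $k>0$ works equally well at the cost of replacing $\mu$ by a slightly larger exponent, which is still covered by (C3) for $r\ge 1$ by monotonicity of the weight (recall $q+V(x)+A(\xi)\ge 1$ everywhere). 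Assembling these observations, the proof is a one-line application of Theorem \ref{toft} once the split property and the $g$-weight property of $(q+V(x)+A(\xi))^{-\mu/r}$ are recorded.
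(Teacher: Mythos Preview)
Your proposal is correct and follows exactly the same route as the paper: apply Theorem \ref{toft} with $M=(q+V(x)+A(\xi))^{-\mu/r}$ and $k=0$, using (C3) to get $M\in L^r(\ardn)$. You supply more of the background checks (the split form of $g^{(A,V)}$ and that $M$ is a $g$-weight) than the paper's two-line proof, but the argument is identical.
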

\begin{proof} We observe that condition \eqref{intcf8} on $\mu$
implies that 
 \[(q+V(x)+A(\xi))^{-\frac{\mu}{r}}\in L^r(\ardn).\]
 Hence, an application of Theorem \ref{toft}  with $M=(q+V(x)+A(\xi))^{-\frac{\mu}{r}},$ and for $k=0$, yields that for $a\in S((q+V(x)+A(\xi))^{-\frac{\mu}{r}},g)$, we have
\[a_t(x,D)\in S_r(L^2(\arn)), \,\,\mbox{ for all }t\in\ar\,,\]
proving the theorem.
\end{proof}
We now recall some basic properties regarding the  Hilbert-Schmidt and trace class operators, see e.g. \cite{NR}.
\begin{rem}\label{NR.rem}
		If $a^w$ extends to a Hilbert-Schmidt operator on $L^2(\mathbb{R}^n)$, then one has 	\[
		\|a^w\|_{S_2}=(2\pi)^{-\frac{n}{2}}\|a\|_{L^2(\mathbb{R}^{2n})}\,,
		\] 
		while if $a^w$ extends to a trace-class operator on $L^2(\mathbb{R}^n)$, then one has 
		\[
		\textnormal{Tr}(a^w)=(2\pi)^{-n}\int_{\mathbb{R}^{2n}}a(x,\xi)dx\,d\xi\,.
		\]
	\end{rem}
We can now obtain some consequences for the negative powers of our Hamiltonians.
 \begin{cor}
		\label{cor.b.1} Let $g=g^{(A,V)}$ be the metric defined by  \eqref{anhmet012x}.
		\begin{enumerate}[label=(\alph*)]
     \item \label{itm:cor.sch.a} Let $a\in S((q+V(x)+A(\xi))^{-\nu},g)$. Then, the following hold true:
			\begin{enumerate}[label=(\roman*)]
				\item For all $\nu \geq \frac{\mu}{2}$, where $\mu$ is the one appearing in \eqref{intcf8}, and for all $t \in \ar$,  the operator $a_t(x,D)$ extends to a Hilbert-Schmidt operator on $L^2(\mathbb{R}^n)$, and in particular we have
				\[
				\|a_t(x,D)\|_{S_2}=(2\pi)^{-\frac{n}{2}}\|a\|_{L^2(\mathbb{R}^{2n})}\,.
				\]
			
				\item  For all $\nu \geq \mu$,where $\mu$ is the one appearing in \eqref{intcf8}, and  for all $t \in \mathbb{R}$, the operator $a_t(x,D)$, extends to a trace-class operator on $L^2(\mathbb{R}^n)$, and in particular we have
				\[
				\textnormal{Tr}(a_t(x,D))=(2\pi)^{-n}\int_{\mathbb{R}^n}\int_{\mathbb{R}^n}a(x,\xi)\,dxd\xi\,.
				\]
					\end{enumerate}
			
\item \label{itm:cor.sch.11} Let us additionally assume that the pair $(A(x), V(\xi))$ satisfies the conditions (C1), (C2), and (C3) for some $\mu>0$. Then, for $1 \leq r<\infty$ and  $\nu\geq \frac{\mu}{r}$, we have 
 \[T^{-\nu}\equiv (q+A(D)+V(x))^{-\nu}\in S_r(L^2(\arn))\,.\]
 	
			\item \label{itm:cor.sch.b} We keep the pair $(A(x), V(\xi))$ satisfying the conditions (C1), (C2) and (C3) for  some $\mu>0$. Then, the following hold true:
			\begin{enumerate}[label=(\roman*)]
				\item The operator $T^{-\frac{\mu}{2}}$ extends to a Hilbert-Schmidt operator on $L^2(\mathbb{R}^n)$, and there exists a constant $C>0$ such that 		\[
				\|T^{-\frac{\mu}{2}}\|_{S_2}\leq C \|(q+V(x)+A(\xi))^{-\frac{\mu}{2}}\|_{L^2(\mathbb{R}^{2n})}\,.
				\] 
				\item The operator $T^{-\mu}$ extends to a trace-class operator on $L^2(\mathbb{R}^n)$, and there exists a constant $C>0$ such that
				\[
				|\textnormal{Tr}(T^{-\mu})|\leq C \int_{\mathbb{R}^n}\int_{\mathbb{R}^n}(q+V(x)+A(\xi))^{-\mu}\,dxd\xi\,.
				\]
			\end{enumerate}
		\end{enumerate}
	\end{cor}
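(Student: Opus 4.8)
The plan is to derive every item of Corollary \ref{cor.b.1} from the results already in place, chiefly Theorem \ref{sch1mf}, Remark \ref{NR.rem}, and the functional calculus for the positive elliptic operator $T=q+A(D)+V(x)$.

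\smallskip
\noindent\textbf{Part (a).} Here we are handed a symbol $a\in S((q+V(x)+A(\xi))^{-\nu},g)$ directly, so no ellipticity is needed. For (i), if $\nu\geq\mu/2$ then, since $q+V(x)+A(\xi)\geq 1$, we have the pointwise bound $(q+V(x)+A(\xi))^{-\nu}\leq (q+V(x)+A(\xi))^{-\mu/2}$, hence $S((q+V(x)+A(\xi))^{-\nu},g)\subset S((q+V(x)+A(\xi))^{-\mu/2},g)$ because the weight only enters \eqref{inwhk} as an upper bound. Applying Theorem \ref{sch1mf} with $r=2$ gives $a_t(x,D)\in S_2(L^2(\arn))$ for all $t$. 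To get the explicit norm I would first treat $t=1/2$: by Remark \ref{NR.rem}, $\|a^w\|_{S_2}=(2\pi)^{-n/2}\|a\|_{L^2}$; since $a\in L^2(\ardn)$ (as $(q+V+A)^{-\mu/2}\in L^2$ by \eqref{intcf8} with the observation that $2\cdot(\mu/2)=\mu$ suffices, cf. the proof of Theorem \ref{sch1mf}), this identity holds. For general $t$ one passes between $t$-quantizations by the standard fact that they differ by a change of variables on the symbol side that is an $L^2(\ardn)$-isometry in the relevant Fourier sense, so the Hilbert--Schmidt norm is $t$-independent and equals $(2\pi)^{-n/2}\|a\|_{L^2}$. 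Item (ii) is identical with $r=1$: $\nu\geq\mu$ forces $a\in S((q+V+A)^{-\mu},g)$, Theorem \ref{sch1mf} gives trace class, and the trace formula of Remark \ref{NR.rem} applies (again first for $t=1/2$, then noting the trace is quantization-independent since it equals the integral of the symbol, which is invariant under the affine change $tx+(1-t)y$).

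\smallskip
\noindent\textbf{Parts (b) and (c).} Now the point is that under (C1)--(C3) the negative powers $T^{-\nu}$ themselves lie in the symbol class $S((q+V(x)+A(\xi))^{-\nu},g)$, after which (b) and (c)(i)--(ii) follow immediately from (a). Concretely: by (C1), $A(\xi)+V(x)\in\Sigma_{A,V}^1=S(\lambda_g^{2\kappa\gamma/(\kappa+\gamma)},g)$ (Proposition \ref{sht57}), and by (C2) it is $g$-elliptic in the sense of Bony--Chemin, i.e. bounded below by a positive multiple of its own weight. Hence $T$ is bounded below by a positive constant and invertible, and the Weyl--H\"ormander functional calculus (parametrix construction for elliptic operators, as in \cite{BC}, \cite{BN}, or iterating Theorem \ref{thm.comp}) yields that for every $\nu>0$ the operator $T^{-\nu}$ is a pseudo-differential operator with symbol in $S((q+V(x)+A(\xi))^{-\nu},g)$. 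Given this, $T^{-\nu}\in S((q+V+A)^{-\nu},g)$ with $\nu\geq\mu/r$ gives $T^{-\nu}\in S_r(L^2(\arn))$ by Theorem \ref{sch1mf} (using $(q+V+A)^{-\nu}\leq(q+V+A)^{-\mu/r}\in L^r$), which is (b). Then (c)(i) is the case $\nu=\mu/2$, $r=2$ of part (a) applied to $a=$ the symbol of $T^{-\mu/2}$, and (c)(ii) is the case $\nu=\mu$, $r=1$; the inequalities (rather than equalities) appear because the full symbol of $T^{-\nu}$ is only controlled in the class, so $\|T^{-\mu/2}\|_{S_2}=(2\pi)^{-n/2}\|\sigma(T^{-\mu/2})\|_{L^2}\leq C\|(q+V+A)^{-\mu/2}\|_{L^2}$ from the defining estimate \eqref{sigmacl} with $\alpha=\beta=0$, and similarly $|\Tr(T^{-\mu})|\leq (2\pi)^{-n}\int|\sigma(T^{-\mu})|\leq C\int(q+V+A)^{-\mu}$.

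\smallskip
\noindent\textbf{Main obstacle.} The only nontrivial input is the claim that $T^{-\nu}$ is itself a $\Psi$DO with symbol in the expected negative-order class; this is precisely where $g$-ellipticity of $q+V+A$ (guaranteed by (C1)+(C2)) is used, via the complex-powers / parametrix machinery of the Weyl--H\"ormander calculus. Everything else is a bookkeeping reduction to Theorem \ref{sch1mf} and Remark \ref{NR.rem}, together with the elementary monotonicity $(q+V+A)^{-\nu}\leq(q+V+A)^{-\mu/r}$ valid because $q+V+A\geq 1$ and $\nu\geq\mu/r$, and the $t$-independence of the Hilbert--Schmidt norm and of the trace.
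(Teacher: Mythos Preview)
Your proof is correct and follows essentially the same route as the paper: part (a) via Theorem \ref{sch1mf} with $r=1,2$ together with Remark \ref{NR.rem}, and parts (b)--(c) via the $g$-ellipticity of $q+V+A$ (from (C1)+(C2)) to place the symbol of $T^{-\nu}$ in $S((q+V+A)^{-\nu},g)$, then back to Theorem \ref{sch1mf} and Remark \ref{NR.rem}. You spell out two points the paper leaves implicit---the monotone inclusion $S((q+V+A)^{-\nu},g)\subset S((q+V+A)^{-\mu/r},g)$ coming from $q+V+A\geq 1$, and the $t$-independence of the Hilbert--Schmidt norm and trace---but these are elaborations rather than a different strategy.
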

\begin{proof}[Proof of Corollary \ref{cor.b.1}]  The proof of part \ref{itm:cor.sch.a} follows by Theorem \ref{sch1mf} for $r=1,2$ and by Remark \ref{NR.rem}, since, as mentioned above, condition (C2) implies condition (C3). We now prove part \ref{itm:cor.sch.11}. We note that, on the one hand, we have $A(D)=A^w(x,D)$, while on the other hand, the multiplication operator $V(x)$ has the potential $V(x)$ itself as is Kohn-Nirenberg symbol, with its Weyl symbol being equal to $V(x)$+ {\em lower order}. The last is due to the asymptotic formula for comparing  the corresponding symbols with respect to different $t$-quantizations (cf. formula (2.3.29) of Theorem 2.3.18 in  \cite{Ler}). Moreover, the $g$-ellipticity of the symbol $q+V(x)+A(\xi)$ ensures that, the negative powers $(q+A(D)+V(x))^{-\nu}$ for all $\nu>0$,  are well defined, and their symbols belong to the class $S((q+V(x)+A(\xi))^{-\nu},g)$. Now an  application of Theorem  \ref{sch1mf} to the symbol of  $T^{-\nu} $ concludes the proof of part \ref{itm:cor.sch.11}.
The proof of  part \ref{itm:cor.sch.b} follows from part  \ref{itm:cor.sch.11}, and Remark \ref{NR.rem}, since, arguing as in the proof of part \ref{itm:cor.sch.11}, we see that the symbol $\sigma_1$ (resp. $\sigma_2$) of $T^{-\frac{\mu}{2}}$ (resp. $T^{-\mu}$) is in the class $((q+V(x)+A(\xi))^{-\frac{\mu}{2}},g)$ (resp. $S((q+V(x)+A(\xi))^{-\mu},g))$. The latter means that we, there exists some constant $C$ such that 
	\[
		|\sigma_1(x,\xi)|\leq C(q+V(x)+A(\xi))^{-\mu}\, ,\quad (\text{resp.}\,\quad \sigma_2 \leq C(q+V(x)+A(\xi))^{-\frac{\mu}{2}})\,.
		\]

			The proof is now complete.
	\end{proof}
	We now derive some consequences for the singular values of the operators we considered. In the sequel we denote by $\lambda_j(K)$ the eigenvalues, in decreasing order, of the compact operator $K$ on the space $H$ as above, and by $s_j(K)$ the singular values of it. 
\begin{cor}\label{cor.eigen.asympt.anharm.}
	Let $g=g^{(A,V)}$ be the metric defined by  \eqref{anhmet012x} and let $1 \leq r < \infty$.
	\begin{enumerate}[label=(\roman*)]
		\item \label{itm:eig.opw}  If $a \in S((q+V(x)+A(\xi))^{-\nu},g)$, assume (C3) holds for some $\mu>0$ as in \eqref{intcf8} and let $\nu\geq\frac{\mu}{r}$. Then 
		\[
		s_j(a_t(x,D))=o(j^{-\frac{1}{r}})\,,\quad \text{as} \quad j \rightarrow \infty\,,
		\]
		for every $t \in \ar$.
		Consequently, also
		 	\[
		 \lambda_j(a_t(x,D))=o(j^{-\frac{1}{r}})\,,\quad \text{as} \quad j \rightarrow \infty\,,
		 \]
		 for every $t \in \ar$.
		\item \label{itm:eig.anh.pr} If we assume that the pair $(A,V)$ satisfy conditions (C1) and (C2), then as a particular case of \ref{itm:eig.opw}, we get that for $T^{-\nu}=(q+A(D)+V(x))^{-\nu}$, we have that
		\begin{equation}\label{eig.asym}
			\lambda_j(T^{-\nu}) =o(j^{-\frac{1}{r}})\,,\quad \text{as}\quad j \rightarrow \infty\,,
		\end{equation} 
		for all $\nu \geq \frac{\mu}{r}$, where $1 \leq r < \infty$.

	\end{enumerate}
\end{cor}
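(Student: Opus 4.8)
The plan is to deduce Corollary~\ref{cor.eigen.asympt.anharm.} from the Schatten class membership already established in Theorem~\ref{sch1mf} (for part~\ref{itm:eig.opw}) and in Corollary~\ref{cor.b.1}\,\ref{itm:cor.sch.11} (for part~\ref{itm:eig.anh.pr}), together with the elementary fact that an operator in a Schatten class $S_r$ has singular values that decay strictly faster than $j^{-1/r}$. First I would treat part~\ref{itm:eig.opw}. By hypothesis $a\in S((q+V(x)+A(\xi))^{-\nu},g)$ with $\nu\geq \mu/r$, so $(q+V(x)+A(\xi))^{-\nu}\leq C(q+V(x)+A(\xi))^{-\mu/r}$ pointwise, and hence $a\in S((q+V(x)+A(\xi))^{-\mu/r},g)$ as well. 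Theorem~\ref{sch1mf} then gives $a_t(x,D)\in S_r(L^2(\arn))$ for every $t\in\ar$, i.e. $\sum_j s_j(a_t(x,D))^r<\infty$.

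The core analytic point is then the following standard lemma, which I would state and prove in one line: if $K$ is compact and $\sum_j s_j(K)^r<\infty$, then $s_j(K)=o(j^{-1/r})$ as $j\to\infty$. Indeed, since the singular values are nonincreasing, $j\, s_j(K)^r \leq \sum_{k=1}^{j} s_k(K)^r$ cannot stay bounded below by a positive constant along a subsequence without contradicting convergence of the tail $\sum_{k\geq j} s_k(K)^r\to 0$; more precisely $j\, s_{2j}(K)^r \leq \sum_{k=j+1}^{2j} s_k(K)^r \to 0$, which forces $s_j(K)=o(j^{-1/r})$. Applying this with $K=a_t(x,D)$ gives the first display of part~\ref{itm:eig.opw}. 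The consequence for eigenvalues then follows from the general inequality $|\lambda_j(K)|\leq s_j(K)$ (Weyl's inequality for the ordered moduli of eigenvalues versus singular values), so $\lambda_j(a_t(x,D))=o(j^{-1/r})$ as well.

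For part~\ref{itm:eig.anh.pr} I would simply invoke Corollary~\ref{cor.b.1}\,\ref{itm:cor.sch.11}: under (C1), (C2) (which implies (C3) with the stated $\mu$), the symbol of $T^{-\nu}=(q+A(D)+V(x))^{-\nu}$ lies in $S((q+V(x)+A(\xi))^{-\nu},g)$, so part~\ref{itm:eig.opw} applies to it verbatim, yielding \eqref{eig.asym}. Note that $T$ is self-adjoint and positive, hence $T^{-\nu}$ is self-adjoint and positive, so in this case the singular values and eigenvalues coincide and no appeal to Weyl's inequality is needed. I do not expect any serious obstacle here: the only nontrivial ingredient, Schatten membership, has already been done, and the passage to the $o(j^{-1/r})$ rate is the soft lemma above. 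The one point that deserves a word of care is the reduction $\nu\geq\mu/r \Rightarrow a\in S((q+V(x)+A(\xi))^{-\mu/r},g)$, which uses that $q+V(x)+A(\xi)\geq 1$ by the choice of $q$, so that raising to a larger power only decreases the weight; with that observed, the proof is complete.
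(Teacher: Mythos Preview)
Your argument is correct and follows the same route as the paper: Schatten membership from Theorem~\ref{sch1mf} (resp.\ Corollary~\ref{cor.b.1}\,\ref{itm:cor.sch.11}), then the standard tail estimate giving $s_j=o(j^{-1/r})$, and Weyl's inequality for the eigenvalues, with positivity of $T^{-\nu}$ noted in part~\ref{itm:eig.anh.pr}. You are simply more explicit than the paper on two points it leaves as ``well known'': the proof of $s_j=o(j^{-1/r})$ from $\sum s_j^r<\infty$, and the reduction $\nu\geq\mu/r$ via $q+V(x)+A(\xi)\geq 1$.
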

\begin{proof}
	As Corollary \ref{cor.b.1} implies we have $a_t(x,D)\in S_r(\arn)$ for $\nu$ in the range as in the statement. Hence, as it is well known, one can get for the membership of the above class that 
	\[
	s_j(a_t(x,D))=o(j^{-\frac{1}{r}})\,,\quad \text{as}\quad j \rightarrow \infty\,.	\]
	Moreover from the Weyl inequality one has
		\[
	\lambda_j(a_t(x,D))=o(j^{-\frac{1}{r}})\,,\quad \text{as}\quad j \rightarrow \infty\,.	\] This proves \ref{itm:eig.opw}, while similar arguments can be used to prove \ref{itm:eig.anh.pr} having into account that $T^{-\nu}$ is positive definite.
\end{proof}
We now derive an immediate consequence on the rate of growth of the eigenvalues. 
 First we note that by Corollary \ref{cor.eigen.asympt.anharm.} (ii), for $\nu=1$ and $r\geq\mu$ we get 
 \beq\lambda_j((q+A(D)+V(x))^{-1})=\os(j^{-\frac{1}{r}}),\, \mbox{ as }j\rightarrow\infty .\eq
From this  we obtain the following estimate for the rate of growth of the eigenvalues of $A(D)+V(x)$:\\

For every $L\in\ene$ there exists $L_0\in\ene$ such that
\beq
Lj^{\frac{1}{r}}\leq\lambda_j(A(D)+V(x)),\,\, \mbox{ for }j\geq L_0. 
\eq
Summarising we have obtained the following: 
Let  $r\geq\mu$. Then,  
for every $L\in\ene$ there exists $L_0\in\ene$ such that
\beq
Lj^{\frac{1}{r}}\leq\lambda_j(A(D)+V(x)),\,\, \mbox{ for }j\geq L_0. 
\eq
Thus, the eigenvalues $\lambda_j(A(D)+V(x))$ have a growth of  order at least
\beq \label{EQ:growthb}
j^{\frac{1}{r}}, \mbox{ as } j\rightarrow\infty.
\eq
We now consider a special class of pairs $(A,V)$   
 satisfying the following condition :

Let $A, V$ satisfy Definition \ref{class1a} respectively for $\gamma, \kappa$.

(C*) There exist constants $C_1, C_2>0$ such that:
\[A(\xi)\geq C_1|\xi|^{2\gamma}\,\,\, {\rm for }\,\,\,|\xi|\geq C_2\,; \,\, V(x)\geq C_1|x|^{2\kappa}\,\,\ {\rm for }\,\, |x|\geq C_2\, .\]

We observe that condition (C*) implies (C2). Moreover, will see that for this type of pairs we can obtain a sharp $\mu$ satisfying (C3). We also note that all pairs $(A,V)$ taken from the list we provided in the Example \ref{exsy3} satisfy (C*). 

\begin{thm} \label{sch1mfff} Let  $(A,V)$ be a pair satisfying condition (C*). Then we have 
\begin{equation}
    \mathcal{I}_{\mu}:=\int\limits_{\arn}\int\limits_{\arn}(q+A(\xi)+V(x))^{-\mu}dxd\xi<\infty,
\end{equation}
provided $\mu>\frac{n(\kappa+\gamma)}{2\kappa\gamma}$.\\

Consequently, if additionally (C1) holds, and $T^{-\nu}=(q+A(D)+V(x))^{-\nu},$ we have
\[T^{-\nu}\in  S_r(L^2(\arn)),\]
provided $\nu>\frac{n(\kappa+\gamma)}{2r\kappa\gamma}$  for $1\leq r<\infty$.
\end{thm}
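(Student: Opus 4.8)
The plan is to estimate the integral $\mathcal{I}_{\mu}$ directly, using the lower bound supplied by condition (C*) to reduce to a model integral that can be computed in polar-type coordinates. First I would split the domain of integration $\arn\times\arn$ into the region where $|x|\geq C_2$ and $|\xi|\geq C_2$ and its complement. On the complement the symbol $q+A(\xi)+V(x)$ is bounded below by a positive constant (using Definition \ref{class1a}\ref{itm:gfun.a} together with continuity of $A$ and $V$), but the complement is unbounded, so I must be a little careful: I would further split it into $\{|x|<C_2,\ |\xi|\geq C_2\}$, $\{|x|\geq C_2,\ |\xi|<C_2\}$ and $\{|x|<C_2,\ |\xi|<C_2\}$, and on each of the first two use that $q+A(\xi)+V(x)\gtrsim 1+|\xi|^{2\gamma}$ (resp.\ $\gtrsim 1+|x|^{2\kappa}$), so those pieces contribute finitely as long as $\mu\cdot 2\gamma>n$ (resp.\ $\mu\cdot 2\kappa>n$), which is weaker than the stated hypothesis; the last piece is over a bounded set and is trivially finite.

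The main term is the integral over $\{|x|\geq C_2,\ |\xi|\geq C_2\}$. Here (C*) gives $q+A(\xi)+V(x)\geq C_1(|x|^{2\kappa}+|\xi|^{2\gamma})$ (absorbing $q\geq 0$ and adjusting the constant), so it suffices to show
\[
\int_{|x|\geq C_2}\int_{|\xi|\geq C_2}\bigl(|x|^{2\kappa}+|\xi|^{2\gamma}\bigr)^{-\mu}\,d\xi\,dx<\infty
\]
when $\mu>\frac{n(\kappa+\gamma)}{2\kappa\gamma}$. The natural device is the change of variables $u=|x|^{\kappa}$-type scaling: pass to polar coordinates in each factor, writing the inner integral as $c_n\int_{C_2}^{\infty}(\rho^{2\kappa}+s^{2\gamma})^{-\mu}s^{n-1}\,ds$ with $\rho=|x|$, then substitute $s^{2\gamma}=\rho^{2\kappa}v$ to homogenize. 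This turns the double integral, after the analogous substitution in $\rho$, into a product of a convergent one-dimensional Beta-type integral in $v$ times $\int_{1}^{\infty}t^{\,\frac{n}{2\kappa}+\frac{n}{2\gamma}-\mu-1}\,dt$ (up to constants and lower-order boundary corrections from the cutoff at $C_2$), and the latter converges precisely when $\mu>\frac{n}{2\kappa}+\frac{n}{2\gamma}=\frac{n(\kappa+\gamma)}{2\kappa\gamma}$. Assembling the four pieces gives $\mathcal{I}_{\mu}<\infty$ under the stated condition.

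For the consequence, I would invoke Theorem \ref{sch1mf}: condition (C1) together with the $g$-ellipticity of $q+V(x)+A(\xi)$ (which follows from (C*) $\Rightarrow$ (C2), as in the proof of Corollary \ref{cor.b.1}\ref{itm:cor.sch.11}) guarantees that the negative power $T^{-\nu}$ is a well-defined operator whose symbol lies in $S((q+V(x)+A(\xi))^{-\nu},g)$. Given $\nu>\frac{n(\kappa+\gamma)}{2r\kappa\gamma}$, pick $\mu$ with $\frac{n(\kappa+\gamma)}{2\kappa\gamma}<\mu\leq r\nu$; then the first part of the theorem yields the integrability condition (C3) for this $\mu$, and since the symbol of $T^{-\nu}$ satisfies $S((q+V+A)^{-\nu},g)\subset S((q+V+A)^{-\mu/r},g)$ because $\nu\geq\mu/r$, Theorem \ref{sch1mf} applied with this $r$ and $\mu$ gives $T^{-\nu}\in S_r(L^2(\arn))$. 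The only genuinely delicate point is the convergence bookkeeping in the model integral — in particular making sure the homogenization substitution is carried out on the full quadrant $\{|x|,|\xi|\geq C_2\}$ rather than on $\{|x|^{2\kappa}+|\xi|^{2\gamma}\geq\text{const}\}$, which is why the preliminary splitting into the three "boundary'' regions is needed; once the exponent $\frac{n}{2\kappa}+\frac{n}{2\gamma}$ emerges, the threshold is forced and the rest is routine.
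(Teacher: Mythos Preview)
Your argument is correct and follows the same overall strategy as the paper: use (C*) to reduce $\mathcal{I}_\mu$ to the model integral $\int\int(1+|x|^{2\kappa}+|\xi|^{2\gamma})^{-\mu}\,dx\,d\xi$, show the latter is finite exactly when $\mu>\frac{n}{2\kappa}+\frac{n}{2\gamma}$, and then feed this into Corollary~\ref{cor.b.1}\ref{itm:cor.sch.11} (equivalently Theorem~\ref{sch1mf}) for the Schatten conclusion.

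The only real difference is in how the model integral is evaluated. The paper does not split the domain or pass to polar coordinates; instead it restricts to the orthant $B=\{\xi_i>0\}$, makes the Cartesian substitution $\xi_i\mapsto \xi_i^{\kappa/\gamma}$ (assuming without loss $\kappa\geq\gamma$) so that both terms become $2\kappa$-homogeneous, and then bounds the Jacobian factor $\prod_i\xi_i^{\kappa/\gamma-1}\leq|\xi|^{n(\kappa/\gamma-1)}\leq|(x,\xi)|^{n(\kappa/\gamma-1)}$ to arrive at a single $\langle X\rangle$-power over $\mathbb{R}^{2n}$, whose integrability threshold gives the same exponent. Your polar--Beta computation is equally valid and perhaps more transparent about where the threshold comes from; the paper's orthant trick is shorter but relies on the slightly non-obvious Jacobian estimate. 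Your explicit handling of the three boundary regions is more careful than the paper, which simply asserts the global comparison $(q+A+V)^{-\mu}\lesssim(1+|x|^{2\kappa}+|\xi|^{2\gamma})^{-\mu}$ in one line (and in fact writes the inequality in the wrong direction at \eqref{inh6s}, an evident typo). For the second part your invocation of Theorem~\ref{sch1mf} via the inclusion $S((q+V+A)^{-\nu},g)\subset S((q+V+A)^{-\mu/r},g)$ is exactly what the paper does through Corollary~\ref{cor.b.1}\ref{itm:cor.sch.11}.
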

\begin{proof} Since the functions $A, V$ are continuous,  they are measurable, and by (C*) there exists a constant $C>0$ such that:
\begin{equation}\label{inh6s}
    \mathcal{I}_{\mu}=\int\limits_{\arn}\int\limits_{\arn}(q+A(\xi)+V(x))^{-\mu}dxd\xi\geq C\int\limits_{\arn}\int\limits_{\arn}(1+|x|^{2\kappa}+|\xi|^{2\gamma})^{-\mu}dxd\xi
\end{equation}

Now the integral 
 \[\int\limits_{\arn}\int\limits_{\arn}(1+|x|^{2\kappa}+|\xi|^{2\gamma})^{-\mu}dxd\xi\]
 can be estimated on the subset $B=\{(x,\xi)\in\ardn: \xi_i >0\, , i=1,\dots, n\,\}$. Without loss of generality, we can assume that 
 $\kappa\geq \gamma,$ . The change of variable in $B$ given by $(x,\xi)\rightarrow (x_1,\dots,x_n,\xi_1^{\frac{\kappa}{\gamma}},\dots,\xi_n^{\frac{\kappa}{\gamma}})$ lead us to 
 
\begin{align*} \int\limits_{\arn}\int\limits_{\arn}(1+|x|^{2\kappa}+|\xi|^{2\gamma})^{-\mu}dxd\xi=&C\int\limits_{B}(1+|x|^{2\kappa}+|\xi|^{2\kappa})^{-\mu}\xi_1^{\frac{\kappa}{\gamma}-1}\cdots\xi_n^{\frac{\kappa}{\gamma}-1}dxd\xi\\
\leq& \,C\int\limits_{B}(1+|x|^{2}+|\xi|^{2})^{-\mu\kappa}|\xi|^{(\frac{\kappa}{\gamma}-1)n}dxd\xi\\
\leq& \,C\int\limits_{B}(1+|x|^{2}+|\xi|^{2})^{-\mu\kappa}|(x,\xi)|^{(\frac{\kappa}{\gamma}-1)n}dxd\xi\\
\leq& \,C\int\limits_{B}\langle X\rangle^{(\frac{\kappa}{\gamma}-1)n-2\mu\kappa}dxd\xi\\
< & \infty\,,
\end{align*}
provided  $(\frac{\kappa}{\gamma}-1)n-2\mu\kappa<-2n.$ Now, the condition $(\frac{\kappa}{\gamma}-1)n-2\mu\kappa<-2n$ for the convergence of the integral is reduced to 
$\mu >\frac{n(\kappa+\gamma)}{2\kappa\gamma }$.
 
For the second part, since (C*) implies (C2), and (C3) holds with $\mu >\frac{n(\kappa+\gamma)}{2\kappa\gamma }$ from the above estimate, an application of Corollary \ref{cor.b.1}  \ref{itm:cor.sch.11} concludes the proof of the theorem, since $\nu >\frac{n(\kappa+\gamma)}{2r\kappa\gamma }$ implies  $r\nu >\frac{n(\kappa+\gamma)}{2\kappa\gamma }$. 
\end{proof}
\begin{rem}\label{rem.sharp}
It is clear that the condition $\mu>\frac{n(\kappa+\gamma)}{2r\kappa\gamma }$ is sharp, as it can be tested on the case of the symbol of the harmonic oscillator and by using the classical integrability criteria for negative powers of $\langle X\rangle$ of dimension $2n$. 
\end{rem}
\begin{ex}\label{ex.rel.sch}
Theorem \ref{sch1mfff} applied to the particular case of the fractional relativistic Schr\"odinger operator $(I-\Delta)^\gamma+\langle x \rangle^{2k}$, where $\gamma>0$, yields $((I-\Delta)^\gamma+\langle x \rangle^{2k})^{-\nu}\in S_r(L^2(\mathbb{R}^n))$, for $\nu >\frac{n(\kappa+\gamma)}{2r\kappa\gamma }$.
\end{ex}
\begin{rem}
Still regarding the example of the fractional relativistic Schr\"odinger operator as above, we note that the condition in Example \ref{ex.rel.sch}, can also be obtained if one uses a different line of arguments: if $N(\lambda)$ is the eigenvalue counting function of $(I-\Delta)^\gamma+\langle x \rangle^{2k}$, then, by Theorem 3.2 of \cite{BR}, we have $N(\lambda) \lesssim \int \int_{\langle \xi \rangle^{2\gamma}+\langle x \rangle^{2\kappa}<\lambda}dx\,d\xi$ for large values of $\lambda$. Indeed, by the change of variable $\xi=\lambda^{\frac{1}{2\gamma}}\xi'$ and $x=\lambda^{\frac{1}{2\kappa}}x'$ we can estimate
\begin{eqnarray*}
N(\lambda) & \lesssim & \int \int_{\langle \xi \rangle^{2\gamma}+\langle x \rangle^{2\kappa}<\lambda}dx\,d\xi \leq \int \int_{| \xi |^{2\gamma}+| x |^{2\kappa}<\lambda}dx\,d\xi\\
& = & \lambda^{n \left(\frac{1}{2\gamma}+\frac{1}{2\kappa} \right)}\int \int \int_{| \xi' |^{2\gamma}+| x' |^{2\kappa}<1}dx'\,d\xi' \lesssim \lambda^{n \left(\frac{1}{2\gamma}+\frac{1}{2\kappa} \right)}\,.
\end{eqnarray*}
From the last estimate one can deduce that $((I-\Delta)^\gamma+\langle x \rangle^{2\kappa})^{-\nu}\in S_r(L^2(\mathbb{R}^n))$ for $\nu>\frac{n(\kappa+\gamma)}{2\kappa \gamma r}$, and our claim follows. 
\end{rem}

\end{document}